\pgfplotsset{compat=newest}
\newcommand{\Krasnoselsky}{Krasnosel'ski\u{\i}}
\newcommand{\PrimS}{\ensuremath{\mathcal{H}}}
\newcommand{\DualS}{\ensuremath{\mathcal{K}}}
\newcommand{\nat}{\ensuremath{\mathbb{N}}}
\newcommand{\weakto}{\ensuremath{\rightharpoonup}}
\newcommand{\Id}{\ensuremath{\mathop{\mathrm{Id}}}}
\newcommand{\p}[1]{\ensuremath{\mathord{\left( #1 \right)}}}
\newcommand{\norm}[1]{\ensuremath{\mathord{\left\Vert #1 \right\Vert}}}
\newcommand{\set}[1]{\ensuremath{\mathord{\left\lbrace #1 \right\rbrace}}}
\newcommand{\setcond}[2]{\ensuremath{\mathord{\left\lbrace #1 : #2 \right\rbrace}}}
\newcommand{\inpr}[2]{\ensuremath{\mathord{\left\langle #1, #2 \right\rangle}}}
\newcommand{\normsq}[1]{\left\Vert #1 \right\Vert^2}
\newcommand{\zer}[1]{\operatorname{\mathrm{zer}}(#1)}
\newcommand{\reals}{\mathbb{R}}
\newcommand{\posop}[1]{\mathcal{M}\p{\PrimS}}
\newcommand{\lp}{\left(}
\newcommand{\rp}{\right)}
\newcommand{\xs}{x^\star}
\newcommand{\xwc}{x_{\mathrm{wc}}^\star}
\newcommand{\dist}[3]{\hspace{0.5mm}\mathrm{dist}_#1(#2,#3)}
\newcommand{\distsq}[3]{\hspace{0.5mm}\mathrm{dist}^2_#1(#2,#3)}
\newcommand{\gra}[1]{\operatorname{gra}(#1)}
\newcommand{\seq}[2]{$(#1_#2)_{#2\in\mathbb{N}}$}
\newcommand{\seqsc}[2]{$\p{#1}_{#2\in\mathbb{N}}$}
\newcommand{\defeq}{\ensuremath{\mathrel{\mathop:}=}}
\DeclareMathOperator{\argmin}{arg\,min}
\theoremstyle{remark}
\newtheorem{remark}{Remark}
\newtheorem{example}{Example}
\crefname{hypothesis}{Hypothesis}{Hypotheses}
\theoremstyle{plain}
\newtheorem{assumption}{Assumption}
\newtheorem{lemma}{Lemma}
\newtheorem{theorem}{Theorem}
\newtheorem{corollary}{Corollary}
\crefname{assumption}{Assumption}{Assumptions}
\theoremstyle{definition}
\newtheorem{definition}{Definition}
\newtheorem*{keywords}{Key words}
\newtheorem*{AMS}{AMS Subject Classification (2020)}
\title{Forward--Backward Splitting with Deviations\\for Monotone Inclusions}
\author{Hamed Sadeghi\thanks{Email: \{\href{mailto:hamed.sadeghi@control.lth.se}{hamed.sadeghi}, \href{mailto:sebastian.banert@control.lth.se}{sebastian.banert}, \href{mailto:pontus.giselsson@control.lth.se}{pontus.giselsson}\}@control.lth.se. Affiliation: Lund University, Lund, Sweden.}
   \and Sebastian Banert\footnotemark[1]
   \and Pontus Giselsson\footnotemark[1]}
\begin{document}
\maketitle

\begin{abstract}
We propose and study a weakly convergent variant of the forward--backward algorithm for solving structured monotone inclusion problems.
Our algorithm features a per-iteration deviation vector which provides additional degrees of freedom.
The only requirement on the deviation vector to guarantee convergence is that its norm is bounded by a quantity that can be computed online.
This approach provides great flexibility and opens up for the design of new and improved forward--backward-based algorithms, while retaining global convergence guarantees.
These guarantees include linear convergence of our method under a metric subregularity assumption without the need to adapt the algorithm parameters.

Choosing suitable monotone operators allows for incorporating deviations into other algorithms, such as Chambolle--Pock and \Krasnoselsky--Mann iterations.
We propose a novel inertial primal--dual algorithm by selecting the deviations along a momentum direction and deciding their size using the norm condition.
Numerical experiments demonstrate our convergence claims and show that even this simple choice of deviation vector can improve the performance, compared, e.g., to the standard Chambolle--Pock algorithm.
\end{abstract}

\begin{keywords}
  forward--backward splitting, monotone inclusions, global convergence, linear convergence rate, inertial forward--backward method, inertial primal--dual algorithm.
\end{keywords}

\begin{AMS}
47H05, 47N10, 49M29, 65K05, 90C25.
\end{AMS}

\section{Introduction}
\label{sec:intro}

\emph{Forward--backward} (FB) splitting \cite{Bruck1975AnIS,Lions_1979,Passty_1979} has been extensively used to solve structured monotone inclusion problems of finding $x \in \PrimS$ such that 
\begin{equation}\label{eq:main_inclusion}
    0\in Ax+Cx,
\end{equation}
where $A\colon\PrimS\to 2^\PrimS$ is a maximally monotone operator, $C\colon\PrimS\to\PrimS$ is a cocoercive operator, and $\PrimS$ is a real Hilbert space. The algorithm sequentially performs a forward step with the operator $C$ followed by a backward step with $A$ to arrive at the iteration
\begin{align}\label{eq:FB-standard}
    x_{n+1}=(\Id + \gamma_nA)^{-1}\circ(\Id-\gamma_nC)x_n,
\end{align}
where $\gamma_n > 0$ is a step-size parameter.

One of the most important special cases of this setting is first-order algorithms for convex optimization: let $f\colon \PrimS \to \reals$ be a convex, differentiable function whose gradient is Lipschitz continuous and $g\colon \PrimS \to \reals \cup \set{\pm \infty}$ be a proper, convex and lower semicontinuous function, and let $A = \partial g$ (the subdifferential of $g$) and $C = \nabla f$. Then, \eqref{eq:main_inclusion} is the problem of finding a minimizer of $f + g$, and \eqref{eq:FB-standard} describes the \emph{proximal gradient method} \cite{Combettes2011ProximalSM}.

In this paper, we present a weakly convergent extension to the standard FB splitting method to solve the monotone inclusion \eqref{eq:main_inclusion}. A simplified instance of our algorithm is given by
\begin{equation}\label{eq:simplified_instance}
    \begin{aligned}
    p_n&=(\Id+\tfrac{1}{\beta}A)^{-1}\circ(\Id - \tfrac{1}{\beta}C)(x_n+u_n)\\
    x_{n+1} &= p_n - u_n
\end{aligned}
\end{equation}
where $u_n$ is a \emph{deviation (vector)} and $\frac{1}{\beta}>0$ is a cocoercivity constant of $C$. By letting $u_n=0$, a step of \eqref{eq:simplified_instance} reduces to the standard FB step in \eqref{eq:FB-standard}. The addition of $u_n$ therefore gives added flexibility that can be utilized to improve performance. In order to ensure convergence of this algorithm, $u_n$ has to satisfy the \emph{norm condition}
\begin{align}\label{eq:simplified-norm-condition}
    \norm{u_{n}}^2 &\leq \tfrac{1-\epsilon}{4}\norm{p_{n-1}-x_{n-1}+u_{n-1}}^2,
\end{align}
where $\epsilon\in[0,1)$ is arbitrary and the quantity to the right-hand side of the inequality is computable online since the variables are known from previous iterations.

Safeguarding is a common technique to ensure global convergence in optimization algorithms, for instance the Wolfe conditions in line-search \cite[Chapter 3]{NocedalWright:2006} ensure a sufficient decrease in the objective function value, and trust-region methods \cite[Chapter 4]{NocedalWright:2006} are based on a quadratic model having sufficient accuracy within a given radius. Recently, a norm condition similar to \eqref{eq:simplified-norm-condition} has been combined with a deep-learning approach to speed up the convergence \cite{Banert2021AcceleratedFO}. Even for monotone operators, line-search strategies with safeguarding have been developed, see \cite[Eq. (2.4)]{Tseng:2000} for an example. In contrast to line search, \eqref{eq:simplified-norm-condition} does not require to compute (and possibly reject) several steps per iteration. For other examples of safeguarding, see \cite{giselsson2016line,sadeghi2021hybrid,themelis2019supermann,zhang2020globally}.

Our main algorithm (\cref{alg:main:deviations}) is more general than \eqref{eq:simplified_instance}. It uses two deviation vectors and a slightly more involved safeguard condition.
A similar algorithm with deviation vectors has been proposed in \cite{Banert2021AcceleratedFO} to extend the proximal gradient method for convex minimization.
The fact that we consider the more general monotone inclusion setting, allows us to apply our results, e.g., to the Chambolle--Pock~\cite{chambolle2011first} and Condat--V{\~u} \cite{condat2013primal,vu2013splitting} methods that both are preconditioned FB methods \cite{he2012convergence}. To facilitate the derivation of these special cases, we derive our algorithm with explicit preconditioning, such as in \cite{Chouzenoux_2013,Combettes_2012,Giselsson2019NonlinearFS-journal,Giselsson_2015,Giselsson2014DiagonalSI,Giselsson2014PreconditioningIF,pock2011diagonal,Raguet_2015}.

Our algorithm is also related to inexact FB methods, which are studied in the framework of monotone inclusions \cite{Raguet_2013,Solodov_2000,Solodov_2001,vu2013splitting} and in a convex optimization setting \cite{condat2013primal,Schmidt_2011_nips,Villa_2013}. By including error terms in the FB splitting algorithms, these works allow for inaccuracies in the forward and backward step evaluations. The convergence of the algorithm is usually based on a summability assumption on the error sequences and would therefore allow arbitrarily large errors as long as they only happen for a finite number of iterations. The idea behind our method is in stark contrast to these methods, as our method is designed for actively choosing the deviations with the aim to improve performance.

We instantiate our general scheme in three special settings; the standard FB setting, the primal-dual setting of Condat--V{\~u}, and the \Krasnoselsky--Mann setting. We also propose a further specialization of the primal-dual setting of Chambolle--Pock in which we select the deviations in a heavy-ball type \cite{polyak1964some} momentum direction (see \cite{sadeghi2022dwifob} for another novel usage of the deviations in a primal--dual setting). The resulting algorithm bears similarities with the inertial FB methods \cite{Alvarez_2000,Alvarez_2001,attouch2019convergence,Cholamjiak_2018,lorenz2015inertial} when applied in a primal-dual setting. Numerical experiments show improved performance of our method compared to Chambolle--Pock and a primal--dual version of Lorenz--Pock \cite{lorenz2015inertial}.

\paragraph{\textbf{Contributions.}} The most notable differences of this work to existing literature can be summarized as follows:
\begin{itemize}
    \item[$-$] Compared to the standard FB, we extend the degrees of freedom by allowing the input argument to the FB operator to deviated from a pre-specified point.
    
    \item[$-$] Unlike various known examples of momentum methods, the increase is not achieved with a fixed number of parameters, but the design parameter has the dimension of the underlying problem.

    \item[$-$] In contrast to inexact FB algorithms \cite{condat2013primal,Raguet_2013,vu2013splitting,Villa_2013}, the bound on the deviations is a scalar condition with known quantities in each step instead of a summability condition that has limited meaning for a finite number of steps.

    \item[$-$] In contrast to the deviation-based FB method for convex optimization in \cite{Banert2021AcceleratedFO}, our work considers more general monotone inclusion problems. Hence, we immediately obtain the algorithms of Chambolle--Pock~\cite{chambolle2011first} and \Krasnoselsky--Mann with deviations as special cases. Moreover, our convergence result is slightly stronger than \cite[Theorem 3.2]{Banert2021AcceleratedFO}. To the best of our knowledge, neither algorithm is a special case of the other.
    
    \item[$-$] In addition to showing weak convergence of our algorithm, we show that under a metric subregularity assumption the algorithm converges strongly to a point in the solution set of the problem with a linear rate of convergence.
    
    \item[$-$] As an example for the expressiveness of the deviation-based approach, we introduce a novel inertial primal--dual algorithm by selecting the deviations along a momentum direction---in the sense of Polyak \cite{polyak1964some}---and deciding their size using the norm condition. 
\end{itemize}

\paragraph{\textbf{Outline of the paper.}} The organization of the paper is as follows. In~\cref{sec:prelim}, we provide notations and  some definitions. In~\cref{sec:alg}, the proposed algorithm is introduced. In \cref{sec:conv}, we prove weak convergence of the method and linear and strong convergence under a metric subregularity assumption. In~\cref{sec:special_cases}, some special cases of the proposed algorithm are presented and \cref{sec:novel_inertial_pd_alg} further specializes one of these to arrive at a novel inertial primal--dual algorithm. We conclude the paper by presenting the numerical results in~\cref{sec:numerical_experiments}.

\section{Preliminaries}
\label{sec:prelim}
Throughout the paper, the set of real numbers is denoted by $\reals$; $\PrimS$ and $\DualS$ denote real Hilbert spaces that are equipped with inner products and induced norms, which are denoted by $\inpr{\cdot}{\cdot}$ and $\norm{\cdot}=\sqrt{\inpr{\cdot}{\cdot}}$, respectively. A bounded, self-adjoint operator $M\colon \PrimS\to \PrimS$ is said to be \emph{strongly positive} if there exists some $c > 0$ such that $\inpr{x}{Mx}\geq c\normsq{x}$ for all $x\in\PrimS$. We use the notation $\posop{\PrimS}$ to denote the set of linear, self-adjoint, strongly positive operators on $\PrimS$. For $M\in\posop{\PrimS}$ and for all $x,y\in\PrimS$, the $M$-induced inner product and norm are denoted by $\inpr{x}{y}_M=\inpr{x}{My}$ and $\norm{x}_M = \sqrt{\inpr{x}{Mx}}$, respectively.

Young's inequality
\[
  \inpr{x}{y} \leq \frac{\omega}{2} \norm{x}^2_M + \frac{1}{2 \omega} \norm{y}^2_{M^{-1}}
\]
holds for all $x, y \in \PrimS$, $\omega > 0$, and $M \in \posop{\PrimS}$. Hence, with the same variables,
\[
  \norm{x + y}_M^2 = \norm{x}_M^2 + \norm{y}_M^2 + 2 \inpr{x}{M y} \leq \p{1 + \omega} \norm{x}_M^2 + \frac{1 + \omega}{\omega} \norm{y}_M^2.
\]

Let $M\in\posop{\PrimS}$, $x\in\PrimS$, and $S\subset\PrimS$ be a nonempty closed convex set. The $M$-induced projection of $x$ onto the set $S$ is defined as $\Pi_{S}^Mx = \argmin_{y\in S}\norm{x-y}_M$, and the $M$-induced distance from $x$ to $S$ is defined by $\dist{M}{x}{S} = \norm{x-\Pi_{S}^Mx}_M$.

The notation $2^\PrimS$ denotes the \emph{power set} of $\PrimS$. A map $A\colon\PrimS\to 2^{\PrimS}$ is characterized by its graph $\gra{A} = \setcond{\p{x, u}\in\PrimS\times\PrimS}{u\in Ax}$. An operator $A\colon \PrimS\to 2^{\PrimS}$ is \emph{monotone} if $\inpr{u-v}{x-y}\geq0$ for all $\p{x, u}, \p{y, v}\in\gra{A}$.
A monotone operator $A\colon \PrimS \to 2^{\PrimS}$ is \emph{maximally monotone} if there exists no monotone operator $B\colon\PrimS\to 2^\PrimS$ such that $\gra{B}$ properly contains $\gra{A}$.  

Let $M\in \posop{\PrimS}$. An operator $T\colon\PrimS\to\PrimS$ is said to be 
\renewcommand{\labelenumi}{{(\roman{enumi})}}
\begin{enumerate}
    \item \emph{$L$-Lipschitz continuous} ($L \geq 0$) w.r.t.\ $\norm{\cdot}_M$ if
        \begin{equation*}
            \norm{Tx-Ty}_{M^{-1}}\leq L\norm{x-y}_M \qquad \text{for all } x,y\in\PrimS;
        \end{equation*}
    \item \emph{$\frac{1}{\beta}$-cocoercive} ($\beta > 0$) w.r.t. $\norm{\cdot}_M$ if
        \begin{equation*}
            \inpr{Tx -Ty}{x-y}\geq\frac{1}{\beta}\norm{Tx-Ty}_{M^{-1}}^2\qquad \text{for all } x,y\in\PrimS;
        \end{equation*}
    \item \emph{nonexpansive} if it is $1$-Lipschitz continuous w.r.t. $\norm{\cdot}$;
    \item \emph{firmly nonexpansive} if 
        \begin{equation*}
            \norm{Tx-Ty}^2 + \norm{\p{\Id-T}x-\p{\Id-T}y}^2\leq\norm{x-y}^2  \qquad \text{for all } x,y\in\PrimS.
        \end{equation*}
\end{enumerate}
By the Cauchy--Schwarz inequality, a $\frac{1}{\beta}$-cocoercive operator is $\beta$-Lipschitz continuous. The \emph{resolvent} of a maximally monotone operator $A\colon \PrimS \to 2^{\PrimS}$ is denoted by $J_{\gamma A} \colon \PrimS \to \PrimS$ and defined as $J_{\gamma A} \defeq (\Id+\gamma A)^{-1}$. $J_{\gamma A}$ has full domain, is firmly nonexpansive \cite[Corollary 23.8]{bauschke2017convex}, and is single-valued.

\section{Forward--backward splitting with deviations}
\label{sec:alg}

We consider structured monotone inclusion problems of the form
\begin{align} \label{eq:monotone_inclusion}
    0 \in Ax + Cx,
\end{align}
that satisfy the following assumptions.

\begin{assumption}
\label{assum:monotone_inclusion}
Assume that $\beta > 0$,
\renewcommand{\labelenumi}{\emph{(\roman{enumi})}}
\begin{enumerate}
    \item $A\colon \PrimS \to 2^{\PrimS}$ is maximally monotone.
    \item $C\colon \PrimS \to \PrimS$ is $\frac{1}{\beta}$-cocoercive with respect to $\norm{\cdot}_M$ with $M\in\posop{\PrimS}$.
    \item The solution set $\zer{A+C} \defeq \setcond{x\in\PrimS}{0\in Ax+Cx}$ is nonempty.
\end{enumerate}
\end{assumption}
Observe that, as a cocoercive operator is maximally monotone \cite[Corollary 20.28]{bauschke2017convex}, and since $C$ has a full domain, the operator $A+C$ is maximally monotone \cite[Corollary 25.5]{bauschke2017convex}.

We present and prove convergence for the following extended variant of FB splitting for solving \eqref{eq:monotone_inclusion}.

\begin{algorithm}[H]
	\caption{Forward--backward splitting with deviations}
	\begin{algorithmic}[1]
	    \State \textbf{Input:} initial point $x_0 \in \PrimS$, the sequences \seq{\zeta}{n}, \seq{\lambda}{n}, and \seq{\gamma}{n} as per \cref{assum:parameters}, and the metric $\norm{\cdot}_M$ with $M\in\posop{\PrimS}$.
	    \State \textbf{set:} $u_0=v_0=0$
		\For {$n=0,1,2,\ldots$}
		    \State $y_n = x_n + u_n$ \label{alg:main:y}
		    \State $z_n = x_n + \frac{\p{1 - \lambda_n} \gamma_n \beta}{2 - \lambda_n \gamma_n \beta} u_n + v_n$ \label{alg:main:z}
		    \State $p_n = (M + \gamma_n A)^{-1} \p{M z_n - \gamma_n C y_n}$ \label{alg:main:FBstep}
		    \State $x_{n+1} = x_n + \lambda_n \p{p_n - z_n}$ \label{alg:main:x}
		    \State choose $u_{n+1}$ and $v_{n+1}$ such that \label{alg:main:deviations}
		    \begin{equation}\label{eq:bound_on_deviations}
                \tfrac{\lambda_{n+1} \gamma_{n+1} \beta}{2 - \lambda_{n+1} \gamma_{n+1} \beta} \norm{u_{n+1}}_{M}^2 + \tfrac{\lambda_{n+1} \p{2 - \lambda_{n+1} \gamma_{n+1} \beta}}{4 - 2 \lambda_{n+1} - \gamma_{n+1} \beta} \norm{v_{n+1}}_{M}^2 \leq \zeta_{n} \ell_n^2
            \end{equation}
            ~~~~~is satisfied, where
            \begin{align}\label{eq:ell_n}
                \ell_{n}^2 = \tfrac{\lambda_n\p{4 - 2 \lambda_n - \gamma_n \beta}}{2} \norm{p_n - x_n + \tfrac{\lambda_n \gamma_n \beta}{2 - \lambda_n \gamma_n \beta} u_n - \tfrac{2 \p{1 - \lambda_n}}{4 -2\lambda_n-\gamma_n\beta} v_n}_M^2
            \end{align}
		\EndFor
	\end{algorithmic}
\label{alg:main}
\end{algorithm}

Our convergence analysis requires that the parameter sequences \seq{\zeta}{n}, \seq{\lambda}{n}, and \seq{\gamma}{n} satisfy the following assumption.
\begin{assumption} \label{assum:parameters}
Choose $\epsilon\in\p{0, \min\left(1, \tfrac{4}{3+\beta}\right)}$, and assume that, for all $n\in\nat$, the following hold: 

\renewcommand\theenumi\labelenumi
\renewcommand{\labelenumi}{{(\roman{enumi})}}

\begin{enumerate}[noitemsep, ref=\Cref{assum:parameters}~\theenumi]
\item $0 \leq \zeta_n \leq 1 - \epsilon$; \label{itm:assump-param-i}
\item $\epsilon \leq \gamma_n \leq \frac{4 - 3\epsilon}{\beta}$; and \label{itm:assump-param-ii} 
\item $\epsilon \leq \lambda_n \leq 2 - \frac{\gamma_n \beta}{2} - \frac{\epsilon}{2}$. \label{itm:assump-param-iii}
\end{enumerate}
\end{assumption}

The sequence \seq{\zeta}{n} relates the norm of the deviation vector $(u_{n+1},v_{n+1})$ in \eqref{eq:bound_on_deviations} to its maximum permissible value; \seq{\gamma}{n} is a sequence of step-size parameters for the FB step~\ref{alg:main:FBstep}, and \seq{\lambda}{n} can be seen as a sequence of relaxation parameters for \seq{x}{n} in step~\ref{alg:main:x} .

For our convergence analysis in~\cref{sec:conv}, we have to choose these sequences in such a way that all the coefficients multiplying the norms in~\eqref{eq:bound_on_deviations} and \eqref{eq:ell_n} have a positive lower bound. Indeed, if \seq{\gamma}{n} and \seq{\lambda}{n} satisfy \cref{assum:parameters}, then
\begin{equation}\label{eq:denominator-lower-bound-1}
  4 - 2 \lambda_n - \gamma_n \beta \geq \epsilon
\end{equation}
and
\begin{equation}\label{eq:denominator-lower-bound-2}
  2 - \lambda_n \gamma_n \beta \geq 2 - \p{2 - \frac{\gamma_n \beta}{2} - \frac{\epsilon}{2}} \gamma_n \beta = \frac{\epsilon \gamma_n \beta}{2} + 2 \p{1 - \frac{\gamma_n \beta}{2}}^2 \geq \frac{\epsilon^2 \beta}{2}.
\end{equation}

\Cref{alg:main} handles the evaluation of $C$ and $A$ in step~\ref{alg:main:FBstep} differently than the standard FB method~\eqref{eq:FB-standard} in two ways. First, the operator $M$ acts as a preconditioning for the resolvent of $A$, and secondly, the points $y_n$ and $z_n$ can be different. \Cref{alg:main} also allows for \emph{deviations} $u_n$ and $v_n$, which can be seen as design parameters of the algorithm. They can in general be chosen in a subset of $\PrimS$ with non-empty interior (if $\ell_n^2 > 0$ in step~\ref{alg:main:deviations}). Hence, the degrees of freedom in the parameter choice are determined by the dimension of $\PrimS$. It is important to note that the upper bound $\ell_n^2$, as it is seen from \eqref{eq:ell_n}, is computable at the time of selecting $u_{n+1}$ and $v_{n+1}$.  See \cite{sadeghi2022inchist} for a generalization of \Cref{alg:main}.

Below, we present some special cases of our method. We defer a more detailed discussion on special cases to \cref{sec:special_cases}.
\begin{example}\label{example:preconditioned-fb}
With the trivial choice of $u_{n+1} = v_{n+1} = 0$, the condition \eqref{eq:bound_on_deviations} is already satisfied, and \cref{alg:main} reduces to the relaxed preconditioned FB iteration
\begin{align*}
    p_n &= (M + \gamma_n A)^{-1} \p{M x_n - \gamma_n C x_n}, \\
    x_{n+1} &= x_n + \lambda_n \p{p_n - x_n}.
\end{align*}
With $M = \Id$ and $\lambda_n = 1$ ($n \in \nat$), we recover~\eqref{eq:FB-standard}.
\end{example}

\begin{example}\label{example:simplified-instance}
With $M = \Id$, $\gamma_n = \frac{1}{\beta}$, $\lambda_n = 1$, $v_n = u_n$, and $\zeta_n = 1 - \epsilon$ ($n \in \nat$), we recover the simplified version from~\eqref{eq:simplified_instance} in  \cref{sec:intro}. It is easy to see that this choice satisfies~\cref{assum:parameters}.
\end{example}

\begin{example}[no relaxation]
  With $\lambda_n = 1$ for all $n\in\nat$, \cref{alg:main} simplifies to the iteration
  \begin{align*}
    p_n &= \p{M + \gamma_n A}^{-1} \p{M \p{x_n + v_n} - \gamma_n C \p{x_n + u_n}}, \\
    x_{n+1} &= p_n - v_n
  \end{align*}
  with the norm condition
  \[
    \frac{\gamma_{n+1} \beta}{2 - \gamma_{n+1} \beta} \norm{u_{n+1}}_{M}^2 + \norm{v_{n+1}}_{M}^2 \leq \frac{\zeta_n \p{2 - \gamma_n \beta}}{2} \norm{p_n - x_n + \frac{\gamma_n \beta}{2 - \gamma_n \beta} u_n}_M^2.
  \]
\end{example}

\begin{example}[forward iteration with deviations]
  With $Ax = \set{0}$ for all $x\in\PrimS$, $v_n = 0$, and $\gamma_n = 2/\beta$ for all $n\in\nat$, \cref{alg:main} simplifies to the iteration
  \begin{align*}
    y_n &= x_n + u_n, \\
    x_{n+1} &= x_n - \tfrac{2\lambda_n}{\beta} M^{-1} C y_n
  \end{align*}
  with the norm condition
  \[
    \frac{\lambda_{n+1}}{1 - \lambda_{n+1}} \norm{u_{n+1}}_{M}^2 \leq \zeta_{n} \lambda_n \p{1-\lambda_n} \norm{\frac{1}{1-\lambda_n} u_n - \frac{2}{\beta}M^{-1} C y_n}_M^2
  \]
\end{example}

\begin{example}[backward iteration with derivations]\label{ex:backward}
  With $Cx = 0$ for all $x\in\PrimS$ and $u_n = 0$ for all $n\in\nat$, \cref{alg:main} simplifies to the iteration
  \begin{align*}
    p_n &= \p{M + \gamma_n A}^{-1} M \p{x_n + v_n}, \\
    x_{n+1} &= x_n + \lambda_n \p{p_n - x_n - v_n}.
  \end{align*}
  Since $C$ is $1/\beta$-cocoercive for all $\beta > 0$, it is possible to set $\beta = 0$ in the norm condition, which then takes the form
  \[
    \frac{\lambda_{n+1}}{2 - \lambda_{n+1}} \norm{v_{n+1}}_{M}^2 \leq \zeta_{n} \lambda_n \p{2 - \lambda_n} \norm{p_n - x_n - \frac{1 - \lambda_n}{2 - \lambda_n} v_n}_M^2.
  \]
\end{example}

\begin{remark}
Many works exist that allow for error terms in FB algorithms \cite{condat2013primal,Raguet_2013,vu2013splitting,Villa_2013}. Convergence is often based on a summability argument so that any summable sequence of errors is allowed. The strength of our condition~\eqref{eq:bound_on_deviations} is that it is iteration-wise; hence, arbitrary large errors would not be accepted. A major difference is that our algorithm does not treat the deviations as errors or inaccuracies in the computation. Instead, they are introduced to allow for actively selecting the deviations with the aim to improve performance.
\end{remark}

\section{Convergence analysis}
\label{sec:conv}

In this section, we provide a convergence analysis for \cref{alg:main}. We start by describing the points in the graph of $A+C$ constructed by \cref{alg:main} (\cref{lemma:iteration-in-graph}) and introducing a Lyapunov inequality in \cref{lemma:Lyapunov_ineq}. Both results are later used to show weak convergence in \cref{thm:main} and strong and linear convergence under a metric subregularity assumption in \cref{thm:lin_conv}. 

\begin{lemma}\label{lemma:iteration-in-graph}
  Suppose that \cref{assum:monotone_inclusion} holds. Let \seq{x}{n}, \seq{y}{n}, \seq{z}{n}, and \seq{p}{n} be sequences generated by \cref{alg:main}. Then, for all $n \in \nat$, $\p{p_n, \Delta_n} \in \gra{A + C}$, where
  \[
    \Delta_n \defeq \frac{M z_n - M p_n}{\gamma_n} - \p{C y_n - C p_n}.
  \]
  Moreover,
  \begin{align}\label{eq:Delta-estimation}
    \norm{\Delta_n}_{M^{-1}}
    &\leq \frac{1}{2 \gamma_n} \norm{2 \p{z_n - p_n} - \beta \gamma_n \p{y_n - p_n}}_M + \frac{\beta}{2} \norm{y_n - p_n}_M \nonumber \\
    &= \frac{1}{2 \gamma_n} \norm{\p{2 - \beta \gamma_n} \p{x_n - p_n} - \frac{\lambda_n \gamma_n \beta \p{2 - \gamma_n \beta}}{2 - \lambda_n \gamma_n \beta} u_n + 2 v_n}_M \nonumber \\
    &\qquad + \frac{\beta}{2} \norm{x_n - p_n + u_n}_M.
  \end{align}
\end{lemma}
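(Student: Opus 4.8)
The plan is to derive both assertions directly from the defining relations of \cref{alg:main}, using only \cref{assum:monotone_inclusion}. First I would establish $\p{p_n, \Delta_n} \in \gra{A + C}$ by unfolding the (preconditioned) resolvent in step~\ref{alg:main:FBstep}. Since $A$ is maximally monotone and $M \in \posop{\PrimS}$, the operator $\p{M + \gamma_n A}^{-1}$ is single-valued with full domain, and the identity $p_n = \p{M + \gamma_n A}^{-1}\p{M z_n - \gamma_n C y_n}$ is equivalent to $M z_n - \gamma_n C y_n \in M p_n + \gamma_n A p_n$, i.e.\ $\tfrac{1}{\gamma_n}\p{M z_n - M p_n} - C y_n \in A p_n$. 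Adding $C p_n$ on both sides yields $\tfrac{1}{\gamma_n}\p{M z_n - M p_n} - \p{C y_n - C p_n} \in A p_n + C p_n$, which is precisely $\Delta_n \in \p{A + C} p_n$.

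For the inequality in \eqref{eq:Delta-estimation}, the key step is to split $\Delta_n$ as
\[
  \Delta_n = \tfrac{1}{2 \gamma_n} M\p{2\p{z_n - p_n} - \beta \gamma_n \p{y_n - p_n}} + \p{\tfrac{\beta}{2} M\p{y_n - p_n} - \p{C y_n - C p_n}}
\]
and apply the triangle inequality for $\norm{\cdot}_{M^{-1}}$. The first summand has $\norm{\cdot}_{M^{-1}}$-norm equal to $\tfrac{1}{2 \gamma_n}\norm{2\p{z_n - p_n} - \beta \gamma_n\p{y_n - p_n}}_M$, because $\norm{M w}_{M^{-1}}^2 = \inpr{w}{M w} = \norm{w}_M^2$. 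For the second summand, writing $w = y_n - p_n$ and $c = C y_n - C p_n$, I would expand
\[
  \norm{\tfrac{\beta}{2} M w - c}_{M^{-1}}^2 = \tfrac{\beta^2}{4}\norm{w}_M^2 - \beta \inpr{w}{c} + \norm{c}_{M^{-1}}^2,
\]
using $\inpr{M w}{M^{-1} c} = \inpr{w}{c}$, and then invoke the $\tfrac{1}{\beta}$-cocoercivity of $C$ with respect to $\norm{\cdot}_M$ from \cref{assum:monotone_inclusion}, which gives $\inpr{w}{c} \geq \tfrac{1}{\beta}\norm{c}_{M^{-1}}^2$; hence the last two terms are nonpositive and $\norm{\tfrac{\beta}{2} M w - c}_{M^{-1}} \leq \tfrac{\beta}{2}\norm{w}_M = \tfrac{\beta}{2}\norm{y_n - p_n}_M$. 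I expect this to be the only nontrivial point: one has to choose the split with exactly the coefficient $\tfrac{\beta}{2}$ in front of $M\p{y_n - p_n}$ so that cocoercivity closes the estimate of the second summand.

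The final equality in \eqref{eq:Delta-estimation} is then pure bookkeeping: substitute $y_n - p_n = x_n - p_n + u_n$ (step~\ref{alg:main:y}) and $z_n - p_n = x_n - p_n + \tfrac{\p{1 - \lambda_n}\gamma_n\beta}{2 - \lambda_n\gamma_n\beta}u_n + v_n$ (step~\ref{alg:main:z}) into $2\p{z_n - p_n} - \beta\gamma_n\p{y_n - p_n}$; the coefficient of $x_n - p_n$ is $2 - \beta\gamma_n$, that of $v_n$ is $2$, and the coefficient of $u_n$ simplifies via $\gamma_n\beta\p{\tfrac{2\p{1 - \lambda_n}}{2 - \lambda_n\gamma_n\beta} - 1} = -\tfrac{\lambda_n\gamma_n\beta\p{2 - \gamma_n\beta}}{2 - \lambda_n\gamma_n\beta}$. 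Rewriting $\norm{y_n - p_n}_M = \norm{x_n - p_n + u_n}_M$ in the remaining term gives the stated expression; no further properties are needed.
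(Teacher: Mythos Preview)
Your proof is correct. The graph inclusion and the final equality are handled exactly as in the paper. For the inequality, you take a cleaner route than the paper: you split $\Delta_n$ as $\tfrac{1}{2\gamma_n} M\bigl(2(z_n-p_n)-\beta\gamma_n(y_n-p_n)\bigr) + \bigl(\tfrac{\beta}{2}M(y_n-p_n) - (C y_n - C p_n)\bigr)$, apply the triangle inequality in $\norm{\cdot}_{M^{-1}}$, and bound the second summand directly from cocoercivity. The paper instead expands $\norm{\Delta_n}_{M^{-1}}^2$ minus the square of the claimed bound, simplifies through several lines, and then closes with cocoercivity, the reformulation $\norm{\tfrac{\beta}{2}M(y_n-p_n) - (C y_n - C p_n)}_{M^{-1}} \leq \tfrac{\beta}{2}\norm{y_n-p_n}_M$ (which is precisely your bound on the second summand), and a Cauchy--Schwarz step to handle a cross term. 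Both arguments hinge on the same cocoercivity estimate, but your triangle-inequality organization avoids the cross term altogether and is shorter; the paper's difference-of-squares computation is a direct verification but does more algebraic work to reach the same conclusion.
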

  Before we prove \cref{lemma:iteration-in-graph}, note that the right-hand side of \eqref{eq:Delta-estimation} only contains data that is computed in \cref{alg:main}, whereas evaluating $\Delta_n$ requires the knowledge of $C p_n$. Therefore, \eqref{eq:Delta-estimation} can be used to check the accuracy of the current iteration or to define a stopping criterion without any extra evaluations of $C$.

  In \cref{example:simplified-instance}, \eqref{eq:Delta-estimation} reduces to
  \[
    \norm{\Delta_n} \leq \beta \norm{y_n - p_n} = \beta \norm{\p{\Id - \p{\Id + \tfrac{1}{\beta} A}^{-1} \circ \p{\Id - \tfrac{1}{\beta} C}} y_n}.
  \]
  Hence, the right-hand side of \eqref{eq:Delta-estimation} plays the role of a residual for the iteration in \cref{alg:main}.

\begin{proof}[Proof of \cref{lemma:iteration-in-graph}]
  Let $n \in \nat$. Step~\ref{alg:main:FBstep} in \cref{alg:main} is equivalent to the inclusion
  \begin{equation}
    \frac{Mz_n - Mp_n }{\gamma_n} - Cy_n \in Ap_n, 
  \end{equation}
  to which adding $C p_n$ on both sides yields the desired inclusion $\Delta_n \in \p{A + C} p_n$.
  Furthermore, we have
  \begin{multline}\label{eq:proof-of-delta-estimation}
    \norm{\Delta_n}_{M^{-1}}^2 - \p{\frac{1}{2 \gamma_n} \norm{2 \p{z_n - p_n} - \beta \gamma_n \p{y_n - p_n}}_M + \frac{\beta}{2} \norm{y_n - p_n}_M}^2 \\
    \begin{aligned}
      &= \frac{1}{\gamma_n^2} \norm{z_n - p_n}_M^2 + \norm{C y_n - C p_n}_{M^{-1}}^2 - \frac{2}{\gamma_n} \inpr{z_n - p_n}{C y_n - C p_n} \\
      &\qquad - \frac{1}{4 \gamma_n^2} \norm{2 \p{z_n - p_n} - \beta \gamma_n \p{y_n - p_n}}_M^2 - \frac{\beta^2}{4} \norm{y_n - p_n}_M^2 \\
      &\qquad - \frac{\beta}{2 \gamma_n} \norm{2 \p{z_n - p_n} - \beta \gamma_n \p{y_n - p_n}}_M \norm{y_n - p_n}_M \\
      &= \norm{C y_n - C p_n}_{M^{-1}}^2 - \frac{2}{\gamma_n} \inpr{z_n - p_n}{C y_n - C p_n} \\
      &\qquad - \frac{\beta^2}{2} \norm{y_n - p_n}_M^2 + \frac{\beta}{\gamma_n} \inpr{z_n - p_n}{y_n - p_n}_M \\
      &\qquad - \frac{\beta}{2 \gamma_n} \norm{2 \p{z_n - p_n} - \beta \gamma_n \p{y_n - p_n}}_M \norm{y_n - p_n}_M \\
      &= \norm{C y_n - C p_n}_{M^{-1}}^2 - \beta \inpr{y_n - p_n}{C y_n - C p_n} \\
      &\qquad + \frac{1}{\gamma_n} \inpr{2 \p{z_n - p_n} - \beta \gamma_n \p{y_n - p_n}}{\frac{\beta}{2} M \p{y_n - p_n} - \p{C y_n - C p_n}} \\
      &\qquad - \frac{\beta}{2 \gamma_n} \norm{2 \p{z_n - p_n} - \beta \gamma_n \p{y_n - p_n}}_M \norm{y_n - p_n}_M.
    \end{aligned}
  \end{multline}
  Notice that, by the $1/\beta$-cocoercivity of $C$ w.r.t. $\norm{\cdot}_M$,
  \begin{equation}\label{eq:cocoercivity-reformulation}
    \norm{y_n - p_n}_M \geq \frac{2}{\beta} \norm{C y_n - C p_n - \frac{\beta}{2} M \p{y_n - p_n}}_{M^{-1}}.
  \end{equation}
  The inequality part in \eqref{eq:Delta-estimation} then follows from~\eqref{eq:proof-of-delta-estimation}, using the $1/\beta$-cocoercivity again, inserting~\eqref{eq:cocoercivity-reformulation}, and applying the Cauchy--Schwarz inequality. The equality in \eqref{eq:Delta-estimation} is easily obtained by inserting the definitions of $y_n$ and $z_n$.
\end{proof}

\begin{lemma}[Lyapunov inequality] \label{lemma:Lyapunov_ineq}
Suppose that \cref{assum:monotone_inclusion,assum:parameters} hold. Let \seq{x}{n}, \seq{u}{n}, \seq{v}{n}, \seq{\ell^2}{n} be sequences generated by \cref{alg:main} and $\xs$ be an arbitrary point in $\zer{A+C}$. Then, 
\begin{equation} \label{eq:Lyapunov_0}
    \norm{x_{n+1} - \xs}_M^2 + \ell_{n}^2 \leq \norm{x_n - \xs}_M^2 + \tfrac{\lambda_n \gamma_n \beta}{2 - \lambda_n \gamma_n \beta} \norm{u_n}_M^2 + \tfrac{\lambda_n \p{2 - \lambda_n \gamma_n \beta}}{4 - 2\lambda_n - \gamma_n \beta} \norm{v_n}_M^2
\end{equation} 
and
\begin{equation}\label{eq:Lyapunov_main}
    \norm{x_{n+1} - \xs}_M^2  + \ell_{n}^2 \leq \norm{x_{n} - \xs}_M^2 + \zeta_{n-1} \ell_{n-1}^2
\end{equation}
hold for all $n\in\nat$.
\end{lemma}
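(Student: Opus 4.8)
The plan is to derive \eqref{eq:Lyapunov_0} first via a monotonicity argument and then obtain \eqref{eq:Lyapunov_main} almost for free from the norm condition \eqref{eq:bound_on_deviations}. For \eqref{eq:Lyapunov_0}, the starting point is \cref{lemma:iteration-in-graph}: since $(p_n,\Delta_n)\in\gra{A+C}$ and $(\xs,0)\in\gra{A+C}$, monotonicity of $A+C$ gives $\inpr{p_n-\xs}{\Delta_n}\geq 0$, i.e.
\[
  \inpr{p_n-\xs}{\tfrac{M z_n - M p_n}{\gamma_n} - (C y_n - C p_n)} \geq 0.
\]
The idea is to expand $\inpr{p_n-\xs}{Mz_n-Mp_n}$ using the polarization identity and rewrite everything in terms of $x_{n+1}-\xs$, $x_n-\xs$, $p_n-x_n$, $u_n$, and $v_n$ via the update rules $x_{n+1}=x_n+\lambda_n(p_n-z_n)$, $y_n=x_n+u_n$, and $z_n=x_n+\tfrac{(1-\lambda_n)\gamma_n\beta}{2-\lambda_n\gamma_n\beta}u_n+v_n$. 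The cocoercivity of $C$ w.r.t.\ $\norm{\cdot}_M$ is used to control the cross term $\inpr{p_n-\xs}{C y_n - C p_n}$; concretely, I would split $p_n-\xs = (p_n-y_n)+(y_n-\xs)$, use cocoercivity on the $(y_n-\xs)$ part (against $Cy_n-C\xs$, noting $C\xs$ is absorbed because $(\xs,-C\xs)\in\gra A$, so it cancels with the $A$-monotonicity inequality), and bound the remaining $\inpr{p_n-y_n}{Cy_n-Cp_n}$ term by Young's inequality. Collecting terms, the coefficients appearing in \eqref{eq:bound_on_deviations} and \eqref{eq:ell_n} should emerge exactly.

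The key algebraic fact to aim for is the identity
\[
  \inpr{p_n-\xs}{M(z_n-p_n)} = \tfrac{1}{2}\norm{z_n-\xs}_M^2 - \tfrac{1}{2}\norm{p_n-\xs}_M^2 - \tfrac{1}{2}\norm{p_n-z_n}_M^2,
\]
after which $\norm{z_n-\xs}_M^2$ and $\norm{p_n-z_n}_M^2$ are re-expanded around $x_n$ (using the explicit form of $z_n$) and $\norm{p_n-\xs}_M^2$ is converted to $\norm{x_{n+1}-\xs}_M^2$ using $x_{n+1}-\xs = (1-\lambda_n)(x_n-\xs)+\lambda_n(p_n-\xs) + \lambda_n\,?$ — more precisely via $p_n-z_n = \tfrac{1}{\lambda_n}(x_{n+1}-x_n)$. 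The bookkeeping must be arranged so that the $\norm{u_n}_M^2$ term carries the coefficient $\tfrac{\lambda_n\gamma_n\beta}{2-\lambda_n\gamma_n\beta}$, the $\norm{v_n}_M^2$ term carries $\tfrac{\lambda_n(2-\lambda_n\gamma_n\beta)}{4-2\lambda_n-\gamma_n\beta}$, and the leftover nonnegative quadratic in $(p_n-x_n, u_n, v_n)$ is precisely $\ell_n^2$ as in \eqref{eq:ell_n} (the completion-of-square showing this quantity is a genuine squared $M$-norm — and hence that $\ell_n^2\geq 0$ — relies on \eqref{eq:denominator-lower-bound-1} and \eqref{eq:denominator-lower-bound-2} guaranteeing positive denominators). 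This completion of the square, matching the somewhat baroque coefficients, is where essentially all the work lies and is the main obstacle; everything else is routine expansion.

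Once \eqref{eq:Lyapunov_0} is established, \eqref{eq:Lyapunov_main} follows immediately: apply \eqref{eq:Lyapunov_0} at index $n$, then bound the two deviation terms on its right-hand side using the norm condition \eqref{eq:bound_on_deviations} written at index $n-1$, namely
\[
  \tfrac{\lambda_n\gamma_n\beta}{2-\lambda_n\gamma_n\beta}\norm{u_n}_M^2 + \tfrac{\lambda_n(2-\lambda_n\gamma_n\beta)}{4-2\lambda_n-\gamma_n\beta}\norm{v_n}_M^2 \leq \zeta_{n-1}\ell_{n-1}^2,
\]
which is exactly \eqref{eq:bound_on_deviations} with $n$ replaced by $n-1$. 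For $n=0$ the initialization $u_0=v_0=0$ makes the deviation terms vanish, so \eqref{eq:Lyapunov_main} holds trivially (interpreting $\zeta_{-1}\ell_{-1}^2$ as $0$, or simply noting the base case separately). Substituting gives \eqref{eq:Lyapunov_main} and completes the proof.
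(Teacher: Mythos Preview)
Your overall strategy---a monotonicity inequality combined with cocoercivity of $C$, the substitution $z_n-p_n=\tfrac{1}{\lambda_n}(x_n-x_{n+1})$, a polarization identity, Young's inequality, and then an algebraic completion-of-squares to produce exactly $\ell_n^2$ and the two deviation coefficients---is precisely the paper's route, and your derivation of \eqref{eq:Lyapunov_main} from \eqref{eq:Lyapunov_0} via \eqref{eq:bound_on_deviations} (with the $n=0$ base case handled by $u_0=v_0=0$) is correct.

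There is, however, one genuinely muddled point. Starting from monotonicity of $A+C$ via \cref{lemma:iteration-in-graph} gives the cross term $\inpr{p_n-\xs}{Cy_n-Cp_n}$, and your split $p_n-\xs=(p_n-y_n)+(y_n-\xs)$ together with cocoercivity does \emph{not} control the piece $\inpr{y_n-\xs}{Cy_n-Cp_n}$: the appearance of $Cp_n$ (rather than $C\xs$) blocks any direct use of cocoercivity between $y_n$ and $\xs$. Your parenthetical remark about $(\xs,-C\xs)\in\gra{A}$ is the right fix, but it means you must use monotonicity of $A$ alone, not of $A+C$. This is exactly what the paper does: from $\tfrac{M(z_n-p_n)}{\gamma_n}-Cy_n\in Ap_n$ and $-C\xs\in A\xs$ one gets $\inpr{\tfrac{M(z_n-p_n)}{\gamma_n}-Cy_n+C\xs}{p_n-\xs}\geq 0$; adding the cocoercivity inequality $\tfrac{1}{\beta}\norm{Cy_n-C\xs}_{M^{-1}}^2\leq\inpr{Cy_n-C\xs}{y_n-\xs}$ between $y_n$ and $\xs$ (not $y_n$ and $p_n$) leaves only $\inpr{Cy_n-C\xs}{y_n-p_n}$, which Young bounds by $\tfrac{\beta}{4}\norm{y_n-p_n}_M^2+\tfrac{1}{\beta}\norm{Cy_n-C\xs}_{M^{-1}}^2$, cancelling the cocoercivity term. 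From there the remainder of your plan is sound.
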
 

\begin{proof}
Let $n \in \nat$ be arbitrary. Step~\ref{alg:main:FBstep} in \cref{alg:main} is equivalent to the inclusion
\begin{equation}
    \frac{Mz_n - Mp_n }{\gamma_n} - Cy_n \in Ap_n. \label{eq:pn_resolvent}
\end{equation}
Since $\xs \in \zer{A+C}$, we also have
\begin{equation} \label{eq:xs_mon_inclusion}
    -C\xs \in A\xs.
\end{equation}
Using \eqref{eq:pn_resolvent}, \eqref{eq:xs_mon_inclusion}, and the monotonicity of $A$ gives
\begin{equation} \label{eq:monotonicity_A}
  0 \leq \inpr{\frac{Mz_n - Mp_n}{\gamma_n} - Cy_n + C\xs}{p_n - \xs}.
\end{equation}
By the $1/\beta$-cocoercivity of $C$ w.r.t. $\norm{\cdot}_M$ we have
\begin{equation} \label{eq:cocoercivity_B}
  \tfrac{1}{\beta}\norm{Cy_n - C\xs}_{M^{-1}}^2 \leq \inpr{Cy_n - C\xs}{y_n - \xs}.
\end{equation}
Adding \eqref{eq:monotonicity_A} and \eqref{eq:cocoercivity_B} yields
\[
    0 \leq
    \inpr{\frac{Mz_n - Mp_n}{\gamma_n} }{p_n - \xs} + \inpr{Cy_n - C\xs}{y_n - p_n} - \tfrac{1}{\beta}\norm{Cy_n-C\xs}_{M^{-1}}^2.
\]
Then, from step~\ref{alg:main:x} in \cref{alg:main}, we substitute $z_n - p_n = \frac{1}{\lambda_n} \p{x_n - x_{n+1}}$ to obtain
\begin{align*}
    0 &\leq \tfrac{1}{\gamma_n \lambda_n} \inpr{x_n - x_{n+1}}{p_n - \xs}_M + \inpr{Cy_n - C\xs}{y_n - p_n} - \tfrac{1}{\beta}\norm{Cy_n - C\xs}_{M^{-1}}^2 \\
    &= \tfrac{1}{2 \gamma_n \lambda_n} \p{\norm{x_n - \xs}_M^2 + \norm{x_{n+1} - p_n}_M^2 - \norm{x_n - p_n}_M^2 - \norm{x_{n+1} - \xs}_M^2}\\
    &\qquad\qquad + \inpr{Cy_n - C\xs}{y_n - p_n} - \tfrac{1}{\beta}\norm{Cy_n - C\xs}_{M^{-1}}^2\\
    &\leq \tfrac{1}{2 \gamma_n \lambda_n} \p{\norm{x_n - \xs}_M^2 + \norm{x_{n+1} - p_n}_M^2 - \norm{x_n - p_n}_M^2 - \norm{x_{n+1} - \xs}_M^2}\\
    &\qquad\qquad + \tfrac{\beta}{4} \norm{y_n - p_n}_M^2
\end{align*}
where we use the identity $2\inpr{a  - b}{c - d}_M = \norm{a - d}_M^2 + \norm{b - c}_M^2 - \norm{a - c}_M^2 - \norm{b - d}_M^2$ for all $a,b,c,d\in\PrimS$ and Young's inequality. Multiplying both sides of the last inequality  by $2 \gamma_n \lambda_n$ and reordering the terms yield
\begin{multline}
  \norm{x_{n+1} - \xs}_M^2 - \norm{x_n - \xs}_M^2 \\
  \begin{aligned}
  &\leq \norm{x_{n+1} - p_n}_M^2 - \norm{x_n - p_n}_M^2 + \tfrac{\lambda_n \gamma_n \beta}{2}\norm{y_n - p_n}_M^2 \\
  &=  \norm{x_n - p_n + \lambda_n \p{p_n - z_n}}_M^2 - \norm{x_n - p_n}_M^2 + \tfrac{\lambda_n \gamma_n \beta}{2} \norm{y_n - p_n}_M^2 \\
  &= \lambda_n^2 \norm{p_n - z_n}_M^2 + 2 \lambda_n \inpr{x_n - p_n}{p_n - z_n}_M + \tfrac{\lambda_n \gamma_n \beta}{2} \norm{y_n - p_n}_M^2 \\
  &= - \lambda_n \p{2 - \lambda_n} \norm{p_n - z_n}_M^2 + 2 \lambda_n \inpr{p_n - z_n}{x_n - z_n}_M\\
  &\qquad\qquad+ \tfrac{\lambda_n \gamma_n \beta}{2} \norm{y_n - p_n}_M^2,
  \end{aligned} \label{eq:Lyapunov_norms}
\end{multline}
where we, once again, used step~\ref{alg:main:x} in~\cref{alg:main} to substitute back $x_{n+1} = x_n + \lambda_n \p{p_n - z_n}$ into the expression to the right-hand side of the inequality. Now, using the definitions of $y_n$ and $z_n$ in steps~\ref{alg:main:y} and~\ref{alg:main:z} of~\cref{alg:main}, we observe that
\begin{align}
    \ell_n^2
    &= \p{\lambda_n\p{2 - \lambda_n} - \tfrac{\lambda_n \gamma_n \beta}{2}} \norm{p_n - x_n + \tfrac{\lambda_n \gamma_n \beta}{2 - \lambda_n \gamma_n \beta} u_n + \tfrac{2 \p{1 - \lambda_n}}{\gamma_n \beta - 2\p{2 - \lambda_n}} v_n}_M^2 \label{eq:Lyapunov_ell} \\
    &= \lambda_n\p{2 - \lambda_n} \norm{p_n - z_n}_M^2 + \lambda_n\p{2 - \lambda_n} \norm{\tfrac{\gamma_n \beta}{2 - \lambda_n \gamma_n \beta} u_n - \tfrac{2 - \gamma_n \beta}{\gamma_n \beta - 2\p{2 - \lambda_n}} v_n}_M^2 \nonumber \\
    &\qquad + 2 \lambda_n\p{2 - \lambda_n} \inpr{p_n - z_n}{\tfrac{\gamma_n \beta}{2 - \lambda_n \gamma_n \beta} u_n - \tfrac{2 - \gamma_n \beta}{\gamma_n \beta - 2\p{2 - \lambda_n}} v_n}_M \nonumber \\
    &\qquad - \tfrac{\lambda_n \gamma_n \beta}{2} \norm{p_n - y_n}_M^2 - \tfrac{\lambda_n \gamma_n \beta}{2} \norm{\tfrac{2}{2 - \lambda_n \gamma_n \beta} u_n + \tfrac{2 \p{1 - \lambda_n}}{\gamma_n \beta - 2\p{2 - \lambda_n}} v_n}_M^2 \nonumber \\
    &\qquad - \lambda_n \gamma_n \beta \inpr{p_n - y_n}{\tfrac{2}{2 - \lambda_n \gamma_n \beta} u_n + \tfrac{2 \p{1 - \lambda_n}}{\gamma_n \beta - 2\p{2 - \lambda_n}} v_n}_M. \nonumber
\end{align}
We can estimate the left-hand side of~\eqref{eq:Lyapunov_0} by adding \eqref{eq:Lyapunov_norms} and \eqref{eq:Lyapunov_ell}. Let us do this step by step. First, let us look at the two inner products with $p_n - z_n$.
\begin{multline*}
    2 \lambda_n \inpr{p_n - z_n}{x_n - z_n + \p{2 - \lambda_n} \p{\tfrac{\gamma_n \beta}{2 - \lambda_n \gamma_n \beta} u_n - \tfrac{2 - \gamma_n \beta}{\gamma_n \beta - 2\p{2 - \lambda_n}} v_n}}_M \\
    \begin{aligned}
        &= 2 \lambda_n \inpr{p_n - z_n}{\p{\tfrac{\gamma_n \beta \p{2 - \lambda_n}}{2 - \lambda_n \gamma_n \beta} - \tfrac{\p{1 - \lambda_n} \gamma_n \beta}{2 - \lambda_n \gamma_n \beta}} u_n - \p{1 + \tfrac{\p{2 - \lambda_n} \p{2 - \gamma_n \beta}}{\gamma_n \beta - 2\p{2 - \lambda_n}}} v_n}_M \\
        &= 2 \lambda_n \inpr{p_n - z_n}{\tfrac{\gamma_n \beta}{2 - \lambda_n \gamma_n \beta} u_n + \tfrac{\p{1 - \lambda_n} \gamma_n \beta}{\gamma_n \beta - 2\p{2 - \lambda_n}} v_n}_M
    \end{aligned}
\end{multline*}
This can be combined with the last term in~\eqref{eq:Lyapunov_ell}, so that we get
\begin{multline}\label{eq:Lyapunov_only_uv}
  \norm{x_{n+1} - \xs}_M^2 - \norm{x_n - \xs}_M^2 + \ell_n^2 \\
  \begin{aligned}
  &\leq 2 \lambda_n \gamma_n \beta \inpr{y_n - z_n}{\tfrac{1}{2 - \lambda_n \gamma_n \beta} u_n + \tfrac{\p{1 - \lambda_n}}{\gamma_n \beta - 2\p{2 - \lambda_n}} v_n}_M \\
  &\qquad + \lambda_n\p{2 - \lambda_n} \norm{\tfrac{\gamma_n \beta}{2 - \lambda_n \gamma_n \beta} u_n - \tfrac{2 - \gamma_n \beta}{\gamma_n \beta - 2\p{2 - \lambda_n}} v_n}_M^2 \\
    &\qquad - 2 \lambda_n \gamma_n \beta \norm{\tfrac{1}{2 - \lambda_n \gamma_n \beta} u_n + \tfrac{\p{1 - \lambda_n}}{\gamma_n \beta - 2\p{2 - \lambda_n}} v_n}_M^2.
  \end{aligned}
\end{multline}
With $y_n - z_n = \tfrac{2 - \gamma_n \beta}{2 - \lambda_n \gamma_n \beta} u_n - v_n$, the right-hand side of~\eqref{eq:Lyapunov_only_uv} is a quadratic expression in $u_n$ and $v_n$ alone:
\begin{multline*}
  \norm{x_{n+1} - \xs}_M^2 - \norm{x_n - \xs}_M^2 + \ell_n^2 \\
  \begin{aligned}
  &\leq 2 \lambda_n \gamma_n \beta \inpr{\tfrac{1 - \gamma_n \beta}{2 - \lambda_n \gamma_n \beta} u_n - \tfrac{\gamma_n \beta - 3 + \lambda_n}{\gamma_n \beta - 2\p{2 - \lambda_n}} v_n}{\tfrac{1}{2 - \lambda_n \gamma_n \beta} u_n + \tfrac{\p{1 - \lambda_n}}{\gamma_n \beta - 2\p{2 - \lambda_n}} v_n}_M \\
  &\qquad + \lambda_n\p{2 - \lambda_n} \norm{\tfrac{\gamma_n \beta}{2 - \lambda_n \gamma_n \beta} u_n - \tfrac{2 - \gamma_n \beta}{\gamma_n \beta - 2\p{2 - \lambda_n}} v_n}_M^2.
  \end{aligned}
\end{multline*}
In order to verify~\eqref{eq:Lyapunov_0}, it suffices to check the coefficients of $\norm{u_n}_M^2$, $\norm{v_n}_M^2$, and $\inpr{u_n}{v_n}_M$ on the right-hand side. This results in
\begin{multline*}
  \norm{x_{n+1} - \xs}_M^2 - \norm{x_n - \xs}_M^2 + \ell_n^2 \\
  \begin{aligned}
  &\leq \tfrac{2 \lambda_n \gamma_n \beta \p{1 - \gamma_n \beta} + \lambda_n \gamma_n^2 \beta^2 \p{2 - \lambda_n}}{\p{2 - \lambda_n \gamma_n \beta}^2} \norm{u_n}_M^2 \\
  &\qquad + \tfrac{- 2 \lambda_n \gamma_n \beta \p{\gamma_n \beta - 3 + \lambda_n} \p{1 - \lambda_n} + \lambda_n\p{2 - \lambda_n} \p{2 - \gamma_n \beta}^2}{\p{\gamma_n \beta - 2\p{2 - \lambda_n}}^2} \norm{v_n}_M^2 \\
  &\qquad + \tfrac{2 \lambda_n \gamma_n \beta \p{1 - \gamma_n \beta} \p{1 - \lambda_n} - 2 \lambda_n \gamma_n \beta \p{\gamma_n \beta - 3 + \lambda_n} - 2 \lambda_n \gamma_n \beta \p{2 - \lambda_n} \p{2 - \gamma_n \beta}}{\p{2 - \lambda_n \gamma_n \beta} \p{\gamma_n \beta - 2\p{2 - \lambda_n}}} \inpr{u_n}{v_n}_M \\
  &= \tfrac{\lambda_n \gamma_n \beta}{2 - \lambda_n \gamma_n \beta} \norm{u_n}_M^2 + \tfrac{\lambda_n \p{ - 2 + \lambda_n \gamma_n \beta}}{\p{\gamma_n \beta - 2\p{2 - \lambda_n}}} \norm{v_n}_M^2,
  \end{aligned}
\end{multline*}
showing~\eqref{eq:Lyapunov_0}. Finally, \eqref{eq:Lyapunov_main} follows from inserting~\eqref{eq:bound_on_deviations}.
\end{proof}

The following theorem is the main convergence result of the paper that guarantees weak convergence for the sequence of iterates obtained from \cref{alg:main}.

\renewcommand\theenumi\labelenumi
\renewcommand{\labelenumi}{{(\roman{enumi})}}

\begin{theorem}\label{thm:main}
Suppose that \cref{assum:monotone_inclusion,assum:parameters} hold. Let the sequences \seq{x}{n}, \seq{u}{n}, \seq{v}{n}, and \seq{\ell^2}{n} be generated by \cref{alg:main}. Then, the following hold:
\begin{enumerate}[noitemsep, ref=\Cref{thm:main}~\theenumi]
    \item The sequence \seq{\ell^2}{n} is summable and the sequences \seq{u}{n} and \seq{v}{n} are convergent to zero.\label{itm:thm-main-i}
    \item For all $\xs \in \zer{A+C}$, the sequence \seqsc{\norm{x_n - \xs}_M}{n} converges.\label{itm:thm-main-ii}
    \item The sequence \seq{x}{n} converges weakly to a point in $\zer{A+C}$.\label{itm:thm-main-iii}
\end{enumerate}
\end{theorem}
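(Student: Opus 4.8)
The plan is to build the three parts in order, each feeding into the next, using the two Lyapunov inequalities from \cref{lemma:Lyapunov_ineq} and the Opial-type argument.

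For part~(i), I would define $\varphi_n \defeq \norm{x_n - \xs}_M^2 + \zeta_{n-1}\ell_{n-1}^2$ (with the convention $\ell_{-1}^2 = 0$) for a fixed $\xs \in \zer{A+C}$. Inequality~\eqref{eq:Lyapunov_main} gives $\norm{x_{n+1}-\xs}_M^2 + \ell_n^2 \leq \varphi_n$, and since $\zeta_n \leq 1-\epsilon < 1$ by \cref{itm:assump-param-i}, we get $\varphi_{n+1} = \norm{x_{n+1}-\xs}_M^2 + \zeta_n\ell_n^2 \leq \varphi_n - (1-\zeta_n)\ell_n^2 \leq \varphi_n - \epsilon\,\ell_n^2$. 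Hence $(\varphi_n)$ is nonincreasing and nonnegative, so it converges; telescoping yields $\epsilon\sum_n \ell_n^2 \leq \varphi_0 < \infty$, so $(\ell_n^2)$ is summable and in particular $\ell_n^2 \to 0$. Then the norm condition~\eqref{eq:bound_on_deviations} together with the uniform positive lower bounds on its coefficients — which follow from \cref{assum:parameters} via \eqref{eq:denominator-lower-bound-1}, \eqref{eq:denominator-lower-bound-2}, and the bound $\lambda_n\gamma_n\beta \geq \epsilon^2\beta$ on the numerator of the $u$-coefficient — forces $\norm{u_{n+1}}_M^2 \to 0$ and $\norm{v_{n+1}}_M^2 \to 0$; since $M \in \posop{\PrimS}$, the $M$-norm is equivalent to the original norm, so $u_n \to 0$ and $v_n \to 0$.

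For part~(ii): from \eqref{eq:Lyapunov_0} we have $\norm{x_{n+1}-\xs}_M^2 \leq \norm{x_n-\xs}_M^2 + a_n$ where $a_n \defeq \tfrac{\lambda_n\gamma_n\beta}{2-\lambda_n\gamma_n\beta}\norm{u_n}_M^2 + \tfrac{\lambda_n(2-\lambda_n\gamma_n\beta)}{4-2\lambda_n-\gamma_n\beta}\norm{v_n}_M^2$; the coefficients here are uniformly bounded above (denominators bounded below by a positive constant, numerators bounded above using $\lambda_n, \gamma_n$ bounded), so $a_n = O(\norm{u_n}_M^2 + \norm{v_n}_M^2)$ is summable by part~(i). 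Thus $(\norm{x_n-\xs}_M^2)$ is a quasi-Fej\'er sequence, and by the standard lemma (a nonnegative sequence satisfying $s_{n+1} \leq s_n + a_n$ with $\sum a_n < \infty$ converges) it converges; taking square roots gives convergence of $\norm{x_n-\xs}_M$. Alternatively — and more cleanly — since $\varphi_n$ converges and $\zeta_{n-1}\ell_{n-1}^2 \to 0$ by part~(i), $\norm{x_n-\xs}_M^2 = \varphi_n - \zeta_{n-1}\ell_{n-1}^2$ converges directly.

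For part~(iii), I would use Opial's lemma in the Hilbert space $\p{\PrimS, \norm{\cdot}_M}$. Part~(ii) gives the first Opial condition (convergence of $\norm{x_n-\xs}_M$ for every $\xs$ in the solution set), and boundedness of $(x_n)$. It remains to show that every weak sequential cluster point of $(x_n)$ lies in $\zer{A+C}$. The key is \cref{lemma:iteration-in-graph}: $(p_n, \Delta_n) \in \gra{A+C}$ for all $n$, and $A+C$ is maximally monotone (noted after \cref{assum:monotone_inclusion}), hence weakly-strongly sequentially closed in $\PrimS \times \PrimS$. So I need: (a) $p_n - x_n \to 0$ strongly, and (b) $\Delta_n \to 0$ strongly. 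For (a), note $\ell_n^2 \to 0$ and the coefficient $\tfrac{\lambda_n(4-2\lambda_n-\gamma_n\beta)}{2}$ in \eqref{eq:ell_n} is bounded below by $\tfrac{\epsilon^2}{2} > 0$, so $\norm{p_n - x_n + \tfrac{\lambda_n\gamma_n\beta}{2-\lambda_n\gamma_n\beta}u_n - \tfrac{2(1-\lambda_n)}{4-2\lambda_n-\gamma_n\beta}v_n}_M \to 0$; since $u_n, v_n \to 0$ and their coefficients are bounded, $\norm{p_n - x_n}_M \to 0$. For (b), the right-hand side of \eqref{eq:Delta-estimation} is a sum of $M$-norms of expressions in $x_n - p_n$, $u_n$, $v_n$ with bounded coefficients (using \cref{assum:parameters} to bound $\gamma_n$ away from $0$ in the $\tfrac{1}{2\gamma_n}$ factor), all of which tend to $0$; hence $\norm{\Delta_n}_{M^{-1}} \to 0$. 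Now if $x_{n_k} \weakto \bar x$, then also $p_{n_k} \weakto \bar x$ (since $p_{n_k} - x_{n_k} \to 0$) and $\Delta_{n_k} \to 0$; maximal monotonicity of $A+C$ gives $0 \in (A+C)\bar x$, i.e.\ $\bar x \in \zer{A+C}$. Opial's lemma then yields $x_n \weakto \bar x$ for a unique $\bar x \in \zer{A+C}$.

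The main obstacle is bookkeeping rather than conceptual: one must verify that \emph{all} the coefficients appearing in \eqref{eq:bound_on_deviations}, \eqref{eq:ell_n}, \eqref{eq:Lyapunov_0}, and \eqref{eq:Delta-estimation} are simultaneously bounded above and bounded below by positive constants under \cref{assum:parameters}, so that "$\ell_n^2 \to 0$" genuinely propagates to "$p_n - x_n \to 0$, $u_n \to 0$, $v_n \to 0$, $\Delta_n \to 0$." The estimates \eqref{eq:denominator-lower-bound-1} and \eqref{eq:denominator-lower-bound-2} already supply the delicate lower bounds on the denominators $4-2\lambda_n-\gamma_n\beta$ and $2-\lambda_n\gamma_n\beta$; combined with $\epsilon \leq \gamma_n \leq \tfrac{4-3\epsilon}{\beta}$ and $\epsilon \leq \lambda_n \leq 2$, every other coefficient is then controlled, so the argument goes through without needing to adapt parameters.
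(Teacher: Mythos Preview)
Your proposal is correct and follows essentially the same approach as the paper: the telescoping of \eqref{eq:Lyapunov_main} for part~(i), the convergence of the Lyapunov sequence together with $\ell_n^2 \to 0$ for part~(ii), and the Opial argument via \cref{lemma:iteration-in-graph} (showing $p_n - x_n \to 0$ and $\Delta_n \to 0$, then invoking weak--strong closedness of $\gra{A+C}$) for part~(iii). Your explicit packaging via $\varphi_n \defeq \norm{x_n-\xs}_M^2 + \zeta_{n-1}\ell_{n-1}^2$ and your more careful verification of the coefficient bounds in \eqref{eq:bound_on_deviations} are minor cosmetic differences, not a genuinely distinct route.
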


\begin{proof}
We start by proving~\ref{itm:thm-main-i} via a telescoping argument for~\eqref{eq:Lyapunov_main}. To this end, let $N \in \nat$. We sum~\eqref{eq:Lyapunov_main} for $n = 1, 2,\ldots,N$ to obtain
\begin{align*}
    \norm{x_{N+1} - \xs}_M^2 + \ell_{N}^2 + \sum_{n=1}^{N-1} \p{1 - \zeta_n}\ell_{n}^2  \leq  \norm{x_1 - \xs}_M^2 + \zeta_0 \ell_0^2.
\end{align*}
Then, rearranging the terms gives
\begin{align*}
    \sum_{n=1}^{N} \p{1 - \zeta_{n}}\ell_n^2 &\leq  \norm{x_1 - \xs}_M^2 - \norm{x_{N+1} - \xs}_M^2 - \zeta_N \ell_{N}^2 \\ 
    &\leq \norm{x_1-\xs}_M^2 + \zeta_0 \ell_0^2. 
\end{align*}
    Since the right hand side of the last inequality is independent of $N$, we conclude that
    \begin{align*}
        \sum_{n=0}^{\infty} \p{1 - \zeta_n} \ell_n^2 < \infty,
    \end{align*}
    which, along with $\zeta_n \leq 1 - \epsilon$ from \cref{assum:parameters}, implies that 
    \begin{equation}
        \ell_{n}^2 \to 0
    \end{equation}
    as $n \to \infty$. Then, \eqref{eq:bound_on_deviations} implies that $u_n \rightarrow 0$ and $v_n \rightarrow 0$ as $n\rightarrow\infty$. This proves~\ref{itm:thm-main-i}.

The proof of \ref{itm:thm-main-ii} follows from the property that~\eqref{eq:Lyapunov_main} defines a Lyapunov function:
since $\zeta_n \leq 1$, we get from  \eqref{eq:Lyapunov_main} that
    \begin{align*}
        \norm{x_{n+1} - \xs}_M^2 + \ell_n^2 &\leq \norm{x_n - \xs}_M^2 + \ell_{n-1}^2,
    \end{align*}
i.e., the sequence \seqsc{\norm{x_n - \xs}_M^2 + \ell_{n-1}^2}{n} is nonincreasing. As it is also nonnegative, it is convergent, say $\norm{x_n - \xs}_M^2 + \ell_{n-1}^2 \to \ell_{\xs} \geq 0$ as $n\to\infty$.
Moreover, $\ell_n^2 \to 0$ by~\ref{itm:thm-main-i} as $n\to\infty$, so $\norm{x_n - \xs}_M^2 \to \ell_{\xs}$, proving~\ref{itm:thm-main-ii}.

For the proof of \ref{itm:thm-main-iii}, recall that $\p{p_n,\Delta_n} \in \gra{A + C}$ for all $n \in \nat$ by~\cref{lemma:iteration-in-graph}. Now, by \eqref{eq:denominator-lower-bound-1}, we have $\frac{\lambda_n \p{4 - 2 \lambda_n - \gamma_n \beta}}{2} \geq \epsilon^2/2$ for all $n\in\nat$. By this and $\ell_{n} \to 0$ as $n \to \infty$, we have that
\[
    p_n - x_n + \tfrac{\lambda_n \gamma_n \beta}{2 - \lambda_n \gamma_n \beta} u_n + \tfrac{2 \p{\lambda_n - 1}}{4 - 2 \lambda_n - \gamma_n\beta} v_n \to 0.
    \]
Next, from $u_n \to 0$ and $v_n \to 0$, together with~\eqref{eq:denominator-lower-bound-1} and \eqref{eq:denominator-lower-bound-2}, we conclude that $p_n - x_n \to 0$ as $n \to \infty$. Then, by \cref{lemma:iteration-in-graph}
\begin{multline*}
    \norm{\Delta_n}_{M^{-1}}
    \leq \frac{1}{2 \gamma_n} \norm{\p{2 - \beta \gamma_n} \p{x_n - p_n} - \frac{\lambda_n \gamma_n \beta \p{2 - \gamma_n \beta}}{2 - \lambda_n \gamma_n \beta} u_n + 2 v_n}_M \\
    + \frac{\beta}{2} \norm{x_n - p_n + u_n}_M,
\end{multline*}
hence, $\Delta_n \to 0$ as $n \to \infty$.

Now, from~\ref{itm:thm-main-ii}, we know that \seqsc{\norm{x_n-\xs}_M^2}{n} is convergent, which implies that the sequence \seq{x}{n} is bounded. Therefore, the latter has at least one weakly convergent subsequence \seqsc{x_{k_n}}{n}, say $x_{k_n} \weakto \xwc \in \PrimS$ as $n \to \infty$. By the arguments above, we have $p_{k_n} \weakto \xwc$ and $\Delta_{k_n} \to 0$. Therefore, $\p{\xwc, 0} \in \gra{A + C}$ by the weak--strong closedness of $\gra{A + C}$ \cite[Proposition 20.38]{bauschke2017convex}. Then, \ref{itm:thm-main-iii} follows from \cite[Lemma 2.47]{bauschke2017convex}, and the proof is complete.
\end{proof}

\subsection{Linear convergence}

In this section, we show the linear convergence of \cref{alg:main} under the following metric subregularity assumption. 

\begin{definition}[$M$-metric subregularity]\label{def:metric_subreg}
Let $M\in\posop{\PrimS}$. A mapping $T:\PrimS\rightarrow2^{\PrimS}$ is called $M$-metrically subregular at $\bar{x}$ for $\bar{y}$ if $(\bar{x},\bar{y})\in\gra{T}$ and there exists a $\kappa\geq0$ along with neighborhoods $\mathcal{U}$ of $\bar{x}$ and $\mathcal{V}$ of $\bar{y}$ such that
\begin{equation} \label{eq:metric_subreg}
    \dist{{M}}{x}{T^{-1}(\bar{y})} \leq \kappa\dist{{M^{-1}}}{\bar{y}}{T(x)\cap\mathcal{V}}
\end{equation}
for all $x\in\mathcal{U}$.
\end{definition}
This definition is equivalent to that in \cite{dontchev2009implicit}, but uses the $M$- and $M^{-1}$-induced  norm distances instead of the standard canonical norm distance. Using this definition simplifies the notation in the linear convergence analysis. Metric subregularity is an important notion in numerical analysis. For a set-valued operator $T$ and an input vector $\bar{y}$, it simply provides an upper bound of how far a point $x$ is from being a solution to inclusion problem $\bar{y}\in T(x)$. This upper bound is given by \eqref{eq:metric_subreg} in terms of the distance of $T(x)$ from the input vector $\bar{y}$. For a detailed discussion on this subject, see \cite{dontchev2009implicit}.

\begin{theorem}[linear convergence]
\label{thm:lin_conv}
Consider the monotone inclusion problem~\eqref{eq:monotone_inclusion} and suppose that  \cref{assum:monotone_inclusion,assum:parameters} hold, that $A+C$ is $M$-metrically subregular at all $\xs\in\zer{A+C}$ for $0$, and that either $\PrimS$ is finite-dimensional or that in  \cref{def:metric_subreg} the neighborhood $\mathcal{U}$ at all $\xs\in\zer{A+C}$ is the whole space $\PrimS$. Then, there exists $0 \leq q < 1$ such that the following statements hold.
\renewcommand{\labelenumi}{(\roman{enumi})}
\begin{enumerate}[noitemsep]
\item There exists $0 < \delta < 1$ such that
  \begin{multline*}
    \distsq{M}{x_{n+1}}{\zer{A + C}} + \p{1 - \delta} \ell_n^2 \\
    \leq q \p{\distsq{M}{x_n}{\zer{A + C}} + \p{1 - \delta} \ell_{n-1}^2}
  \end{multline*}
  for all $n \geq 1$;\label{itm:thm-linear-i}
  \item there exist $x^* \in \zer{A + C}$ and $c > 0$ such that $\norm{x_n - x^*}^2 \leq cq^n$ for all $n \geq 1$. Hence, $x_n \to x^*$ even if $\PrimS$ is infinite-dimensional.\label{itm:thm-linear-ii}
\end{enumerate}
\end{theorem}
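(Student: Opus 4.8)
The plan is to combine the Lyapunov inequality \eqref{eq:Lyapunov_main} from \cref{lemma:Lyapunov_ineq} with the metric subregularity assumption, turning the additive Lyapunov decrease into a contraction. First I would introduce the Lyapunov quantity $\phi_n \defeq \distsq{M}{x_n}{\zer{A+C}} + (1-\delta)\ell_{n-1}^2$ for a $\delta\in(0,1)$ to be chosen. The key difficulty compared to the plain weak-convergence proof is that \eqref{eq:Lyapunov_main} is stated for a \emph{fixed} $\xs\in\zer{A+C}$, whereas now we need a statement in terms of $\dist{M}{x_{n+1}}{\zer{A+C}}$; I would apply \eqref{eq:Lyapunov_0} (or \eqref{eq:Lyapunov_main}) with $\xs = \Pi^M_{\zer{A+C}} x_{n+1}$ — but since the left-hand side involves $x_{n+1}$ one actually wants the projection of $x_{n+1}$ there, then bound $\distsq{M}{x_n}{\zer{A+C}} \le \norm{x_n - \Pi^M_{\zer{A+C}}x_{n+1}}_M^2$, which is immediate by definition of the projection. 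This gives $\phi_{n+1} \le \distsq{M}{x_n}{\zer{A+C}} + (1-\delta)\ell_{n-1}^2 - \delta \ell_{n-1}^2 + (\zeta_{n-1} - (1-\delta))\ell_{n-1}^2$ type bookkeeping; more carefully, from \eqref{eq:Lyapunov_main} applied at the projection point one gets $\distsq{M}{x_{n+1}}{\zer{A+C}} + \ell_n^2 \le \distsq{M}{x_n}{\zer{A+C}} + \zeta_{n-1}\ell_{n-1}^2$, so $\phi_{n+1} \le \phi_n - \delta\ell_n^2 - (1-\zeta_{n-1}-\delta)\ell_{n-1}^2 \le \phi_n - \delta\ell_n^2$ provided $\delta \le 1 - \zeta_{n-1}$, which holds if $\delta \le \epsilon$ by \cref{itm:assump-param-i}.

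Next I would use metric subregularity to control $\distsq{M}{x_{n+1}}{\zer{A+C}}$ and $\ell_n^2$ together from below by $\ell_n^2$ alone (up to constants), so that $\delta\ell_n^2 \ge q'\phi_{n+1}$ for some $q'>0$. Here $(p_n,\Delta_n)\in\gra{A+C}$ from \cref{lemma:iteration-in-graph}, and for $n$ large $p_n$ lies in the neighborhood $\mathcal U$ (or $\mathcal U = \PrimS$ by hypothesis), so $\dist{M}{p_n}{\zer{A+C}} \le \kappa \norm{\Delta_n}_{M^{-1}}$. By \cref{lemma:iteration-in-graph}, $\norm{\Delta_n}_{M^{-1}}$ is bounded by a constant times $\norm{x_n - p_n}_M + \norm{u_n}_M + \norm{v_n}_M$; and $\norm{x_{n+1}-x_n}_M = \lambda_n\norm{p_n - z_n}_M$, together with the definition \eqref{eq:ell_n} of $\ell_n^2$ and the denominator lower bounds \eqref{eq:denominator-lower-bound-1}–\eqref{eq:denominator-lower-bound-2}, lets me bound $\norm{x_n - p_n}_M$, $\norm{u_n}_M$, $\norm{v_n}_M$ by constant multiples of $\ell_{n-1}$ and $\ell_n$ (the $u_n,v_n$ bounds via \eqref{eq:bound_on_deviations} and the norm condition). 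Combining, $\distsq{M}{x_{n+1}}{\zer{A+C}} \le \distsq{M}{p_n}{\zer{A+C}} + \norm{x_{n+1}-p_n}_M^2 \cdot(\text{const}) \le C(\ell_n^2 + \ell_{n-1}^2)$, and likewise $\ell_{n-1}^2$ already appears in $\phi_n$; so $\phi_{n+1} \le C'(\ell_n^2 + \ell_{n-1}^2)$. Feeding this back into $\phi_{n+1} \le \phi_n - \delta\ell_n^2$ and also using $\ell_{n-1}^2 \le \frac{1}{1-\delta}\phi_n$, a two-step (or averaged) argument yields $\phi_{n+1} \le q\,\phi_n$ with $q = q(\epsilon,\beta,\kappa,\delta)<1$ after choosing $\delta$ small enough; this proves \ref{itm:thm-linear-i}.

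For \ref{itm:thm-linear-ii}, from \ref{itm:thm-linear-i} we get $\phi_n \le q^{n-1}\phi_1$, hence $\distsq{M}{x_n}{\zer{A+C}} \le q^{n-1}\phi_1$ and $\ell_n^2 \le \frac{1}{1-\delta}q^n\phi_1$. Since $\norm{x_{n+1} - x_n}_M = \lambda_n\norm{p_n-z_n}_M$ is, by the estimates above, bounded by a constant times $\ell_n$, hence by a constant times $q^{n/2}$, the sequence \seq{x}{n} is Cauchy in $\norm{\cdot}_M$ (geometric series), so it converges strongly to some $x^*$; passing to the limit and using $\dist{M}{x_n}{\zer{A+C}}\to 0$ with closedness of $\zer{A+C}$ gives $x^*\in\zer{A+C}$, and equivalence of $\norm{\cdot}_M$ and $\norm{\cdot}$ (strong positivity of $M$) converts the rate to $\norm{x_n-x^*}^2 \le cq^n$. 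I expect the main obstacle to be the second paragraph: assembling the chain of estimates that bounds $\distsq{M}{x_{n+1}}{\zer{A+C}}$ above by a multiple of $\ell_n^2 + \ell_{n-1}^2$, since it requires carefully tracking how $\norm{x_n-p_n}_M$, $\norm{u_n}_M$, $\norm{v_n}_M$ and $\norm{\Delta_n}_{M^{-1}}$ all relate back to the $\ell$'s through \eqref{eq:ell_n}, \eqref{eq:bound_on_deviations}, and \cref{lemma:iteration-in-graph}, and getting every constant to be uniform in $n$ via \cref{assum:parameters} — plus the mild technical point that $p_n$ eventually enters the subregularity neighborhood, handled by $p_n - x_n \to 0$ and $x_n$ being bounded (in the infinite-dimensional case one simply assumes $\mathcal U = \PrimS$).
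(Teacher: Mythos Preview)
Your plan is essentially the same as the paper's: use \cref{lemma:iteration-in-graph} and metric subregularity to bound the distance to the solution set by $\ell_n^2+\ell_{n-1}^2$, combine with the Lyapunov inequality \eqref{eq:Lyapunov_main} specialized to projections, and extract a contraction; then for \ref{itm:thm-linear-ii} bound the increments by $\ell$'s and run a geometric Cauchy argument. Two slips to fix, however.

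First, your choice $\xs=\Pi^M_{\zer{A+C}}x_{n+1}$ does \emph{not} yield the displayed inequality. With that choice, \eqref{eq:Lyapunov_main} gives
\[
  \distsq{M}{x_{n+1}}{\zer{A+C}}+\ell_n^2 \le \norm{x_n-\Pi^M_{\zer{A+C}}x_{n+1}}_M^2+\zeta_{n-1}\ell_{n-1}^2,
\]
and the observation $\distsq{M}{x_n}{\zer{A+C}}\le\norm{x_n-\Pi^M_{\zer{A+C}}x_{n+1}}_M^2$ goes the wrong way for replacing the right-hand side. The paper instead takes $\xs_n=\Pi^M_{\zer{A+C}}x_n$, which gives $\norm{x_n-\xs_n}_M^2=\distsq{M}{x_n}{\zer{A+C}}$ on the right and then uses $\distsq{M}{x_{n+1}}{\zer{A+C}}\le\norm{x_{n+1}-\xs_n}_M^2$ on the left; this produces exactly the inequality you state.

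Second, your ``two-step (or averaged) argument'' is a bit loose. After dropping the $(1-\zeta_{n-1}-\delta)\ell_{n-1}^2$ term you only have $\phi_{n+1}\le\phi_n-\delta\ell_n^2$, and combining this with $\phi_{n+1}\le C'(\ell_n^2+\ell_{n-1}^2)$ and $\ell_{n-1}^2\le\tfrac{1}{1-\delta}\phi_n$ does not directly give $\phi_{n+1}\le q\phi_n$ unless $C'$ is small. The paper's route is cleaner: it bounds $\distsq{M}{x_n}{\zer{A+C}}$ (rather than $x_{n+1}$) by $c_1\ell_n^2+c_2\ell_{n-1}^2$, multiplies this by a small $\delta'>0$, and \emph{adds} it to \eqref{eq:Lyapunov_main}, obtaining
\[
  \distsq{M}{x_{n+1}}{\zer{A+C}}+(1-\delta'c_1)\ell_n^2 \le \bigl(1-\tfrac{\delta'}{2}\bigr)\distsq{M}{x_n}{\zer{A+C}}+(1-\epsilon+\delta'c_2)\ell_{n-1}^2,
\]
after which one simply chooses $\delta'$ so that the two coefficients match up. Alternatively, in your framework, keep the $(\epsilon-\delta)\ell_{n-1}^2$ term in the decrease so that $\phi_n-\phi_{n+1}\ge\min(\delta,\epsilon-\delta)(\ell_n^2+\ell_{n-1}^2)$; then $\phi_{n+1}\le C'(\ell_n^2+\ell_{n-1}^2)$ immediately gives $\phi_{n+1}\le q\phi_n$ with $q=\tfrac{C'}{C'+\min(\delta,\epsilon-\delta)}$.

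A final minor point: $\norm{x_{n+1}-x_n}_M=\lambda_n\norm{p_n-z_n}_M$ is not bounded by a constant times $\ell_n$ alone, since the coefficients of $u_n,v_n$ inside $p_n-z_n$ differ from those inside \eqref{eq:ell_n}; you need the $u_n,v_n$ bound from \eqref{eq:bound_on_deviations}, giving $\norm{x_{n+1}-x_n}_M^2\le c_1'\ell_n^2+c_2'\ell_{n-1}^2$ (this is the paper's \cref{lem:x-p-u-v-ell-estimation}). The Cauchy argument then goes through unchanged.
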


\begin{proof}
  We start by proving~\ref{itm:thm-linear-i}. Let $\xs\in\zer{A+C}$ be the weak cluster point of the sequences \seq{x}{n} and \seq{p}{n} according to~\cref{thm:main}. From the metric subregularity of $A+C$ at $x^*$ for $0$, we get $\kappa \geq 0$ and neighborhoods $\mathcal{U}$ of $x^*$ and $\mathcal{V}$ of $0$ such that 
  \begin{align}\label{eq:metric_subreg_in_proof}
    \dist{M}{x}{\zer{A+C}} &\leq \kappa\dist{{M^{-1}}}{0}{{\p{A + C} \p{x} \cap \mathcal{V}}}
  \end{align}
  for all $x \in \mathcal{U}$.

If $\PrimS$ is finite-dimensional, then $p_n \to x^*$, and there exists $n_0 \in \nat$ such that $p_n \in \mathcal{U}$ for all $n \geq n_0$. If $\PrimS$ is infinite-dimensional, then $\mathcal{U} = \PrimS$, and $p_n \in \mathcal{U}$ for all $n\in \nat$.

Now, \cref{lemma:iteration-in-graph} gives $\Delta_n \in \p{A + C} p_n$ for all $n \in \nat$, and $\Delta_n \to 0$ by the proof of \cref{thm:main}. Let $n_0 \in \nat$ be chosen such that $\Delta_n \in \mathcal{V}$ in addition to $p_n \in \mathcal{U}$ for all $n \geq n_0$.  Setting $x = p_n$ in \eqref{eq:metric_subreg_in_proof} hence gives
  \begin{multline}\label{eq:pn-delta-estimation}
    \dist{M}{p_n}{\zer{A+C}} \\
    \begin{aligned}
      &\leq \kappa\dist{{M^{-1}}}{0}{{\p{A + C} \p{p_n} \cap \mathcal{V}}} \\
      &\leq \kappa \norm{\Delta_n}_{M^{-1}} \\
      &\leq \frac{\kappa}{2 \gamma_n} \norm{\p{2 - \beta \gamma_n} \p{x_n - p_n} - \frac{\lambda_n \gamma_n \beta \p{2 - \gamma_n \beta}}{2 - \lambda_n \gamma_n \beta} u_n + 2 v_n}_M
    \end{aligned} \\
    + \frac{\beta \kappa}{2} \norm{x_n - p_n + u_n}_M
  \end{multline}
  for all $n \geq n_0$, where we used \eqref{eq:Delta-estimation} in the last step. From \cref{lemma:Lyapunov_ineq} we have that
  \begin{equation}\label{eq:Lyapunov-in-linear-convergence}
    \norm{x_{n+1}-\xs}_M^2  + \ell_{n}^2 \leq \norm{x_{n}-\xs}_M^2 + \zeta_{n-1} \ell_{n-1}^2.
  \end{equation}
  Now, set $\xs_n \defeq \Pi_{\zer{A+C}}^M\p{x_n}$. Then, from \eqref{eq:Lyapunov-in-linear-convergence}, we get
  \begin{align}\label{eq:Lyapunov3}
    \distsq{M}{x_{n+1}}{\zer{A+C}} + \ell_n^2&\leq\norm{x_{n+1}-\xs_n}_M^2 + \ell_n^2\nonumber\\
                                             &\leq \norm{x_{n}-\xs_n}_M^2 + \zeta_{n-1}\ell_{n-1}^2\nonumber\\
                                             &= \distsq{M}{x_n}{\zer{A+C}} + \zeta_{n-1}\ell_{n-1}^2.
  \end{align} 
Next, we will estimate both sides of \eqref{eq:pn-delta-estimation} in terms of $\distsq{M}{x_n}{\zer{A+C}}$, $\ell_n^2$, and $\ell_{n-1}^2$. Let $p_n^* \defeq \Pi_{\zer{A+C}}^M\p{p_n}$. Then, since $\Pi_{\zer{A + C}}$ is the projection onto a convex set w.r.t.\ the $M$-induced metric, \cite[Theorem~3.16]{bauschke2017convex} yields
  \begin{multline*}
    \distsq{M}{p_n}{\zer{A+C}} \\
    \begin{aligned}
      &\geq \norm{p_n - p_n^*}_M^2 - 2 \inpr{x_n^* - x_n}{p_n^* - x_n^*}_M \\
      &= \norm{p_n - p_n^*}_M^2 - 2 \inpr{x_n^* - x_n}{p_n^* - p_n}_M - 2 \inpr{x_n^* - x_n}{p_n - x_n^*}_M \\
      &= \norm{p_n - p_n^* - x_n^* + x_n}_M^2 - \norm{x_n^* - x_n}_M^2 - 2 \inpr{x_n^* - x_n}{p_n - x_n^*}_M \\
      &\geq \norm{x_n^* - x_n}_M^2 - 2 \inpr{x_n^* - x_n}{p_n - x_n}_M \\
      &\geq \tfrac{1}{2} \norm{x_n^* - x_n}_M^2 - 2 \norm{p_n - x_n}_M^2,
    \end{aligned}
  \end{multline*}
  where we used Young's inequality in the last step. Combining this with \eqref{eq:pn-delta-estimation} gives
  \begin{multline}\label{eq:xn-delta-estimation}
    \tfrac{1}{2} \distsq{M}{x_n}{\zer{A+C}} \\
    \begin{aligned}
      &\leq \Biggl(\frac{\kappa}{2 \gamma_n} \norm{\p{2 - \beta \gamma_n} \p{x_n - p_n} - \frac{\lambda_n \gamma_n \beta \p{2 - \gamma_n \beta}}{2 - \lambda_n \gamma_n \beta} u_n + 2 v_n}_M \\
      &\qquad + \frac{\beta \kappa}{2} \norm{x_n - p_n + u_n}_M\Biggr)^2 + 2 \norm{p_n - x_n}_M^2 \\
      &\leq \frac{\kappa^2}{2 \gamma_n^2} \norm{\p{2 - \beta \gamma_n} \p{x_n - p_n} - \frac{\lambda_n \gamma_n \beta \p{2 - \gamma_n \beta}}{2 - \lambda_n \gamma_n \beta} u_n + 2 v_n}_M^2 \\
      &\qquad + \frac{\beta^2 \kappa^2}{2} \norm{x_n - p_n + u_n}_M^2 + 2 \norm{p_n - x_n}_M^2,
    \end{aligned}
  \end{multline}
  where we used Young's inequality in the last step.
  It remains to estimate the right-hand side of \eqref{eq:xn-delta-estimation} in terms of $\ell_n^2$ and $\ell_{n-1}^2$. To this end, we use the following lemma.
  \let\qed\relax
\end{proof}

\begin{lemma}\label{lem:x-p-u-v-ell-estimation}
  Let \seq{x}{n}, \seq{p}{n}, \seq{u}{n}, \seq{v}{n}, and \seq{\ell^2}{n} be generated by \cref{alg:main} under \cref{assum:parameters}, and let \seq{\mathfrak a}{n}, \seq{\mathfrak b}{n}, and \seq{\mathfrak c}{n} be bounded sequences of real numbers. Then there exist \(c_1, c_2 > 0\) (which do not depend on $n$) such that
  \[
    \norm{\mathfrak a_n \p{p_n - x_n} + \mathfrak b_n u_n + \mathfrak c_n v_n}_M^2 \leq c_1 \ell_n^2 + c_2 \ell_{n-1}^2.
  \]
\end{lemma}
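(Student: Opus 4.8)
The plan is to reduce the left-hand side to the single $M$-norm that (up to a bounded positive scalar) defines $\ell_n^2$, and then to absorb the deviations $u_n,v_n$ using the safeguard \eqref{eq:bound_on_deviations}. Write $\ell_n^2 = C_n \norm{w_n}_M^2$, where
\begin{gather*}
  w_n \defeq p_n - x_n + \alpha_n u_n + \delta_n v_n, \qquad \alpha_n \defeq \tfrac{\lambda_n \gamma_n \beta}{2 - \lambda_n \gamma_n \beta}, \\
  \delta_n \defeq -\tfrac{2 \p{1 - \lambda_n}}{4 - 2 \lambda_n - \gamma_n \beta}, \qquad C_n \defeq \tfrac{\lambda_n \p{4 - 2 \lambda_n - \gamma_n \beta}}{2}.
\end{gather*}
First I would record the uniform scalar bounds. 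By \cref{assum:parameters}, the quantities $\lambda_n$, $\gamma_n \beta$, and $\zeta_n$ lie in fixed bounded intervals; by \eqref{eq:denominator-lower-bound-1} and \eqref{eq:denominator-lower-bound-2} the denominators satisfy $4 - 2 \lambda_n - \gamma_n \beta \geq \epsilon$ and $2 - \lambda_n \gamma_n \beta \geq \tfrac{\epsilon^2 \beta}{2}$, and moreover $\lambda_n \gamma_n \beta \geq \epsilon^2 \beta$. These facts imply: \textbf{(a)} $C_n \geq \tfrac{\epsilon^2}{2}$ and $C_n$ is bounded above, hence $\norm{w_n}_M^2 \leq \tfrac{2}{\epsilon^2} \ell_n^2$; \textbf{(b)} $\alpha_n$ and $\delta_n$ are bounded (bounded numerators, denominators bounded away from $0$); and \textbf{(c)} both coefficients on the left-hand side of \eqref{eq:bound_on_deviations} are bounded below by positive constants $c_u, c_v$ (again, numerators bounded away from $0$, denominators bounded above).

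Next, applying \eqref{eq:bound_on_deviations} at index $n-1$, together with $\zeta_{n-1} \leq 1$ and \textbf{(c)}, yields $\norm{u_n}_M^2 \leq c_u^{-1} \ell_{n-1}^2$ and $\norm{v_n}_M^2 \leq c_v^{-1} \ell_{n-1}^2$ for every $n \geq 1$. Finally, since $p_n - x_n = w_n - \alpha_n u_n - \delta_n v_n$, the target vector equals $\mathfrak a_n w_n + \p{\mathfrak b_n - \mathfrak a_n \alpha_n} u_n + \p{\mathfrak c_n - \mathfrak a_n \delta_n} v_n$, whose three scalar coefficients are bounded by the hypothesis on $(\mathfrak a_n), (\mathfrak b_n), (\mathfrak c_n)$ together with \textbf{(b)}. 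Using $\norm{a + b + c}_M^2 \leq 3 \norm{a}_M^2 + 3 \norm{b}_M^2 + 3 \norm{c}_M^2$ and inserting the estimates from \textbf{(a)} and the preceding step produces a bound of the announced form $c_1 \ell_n^2 + c_2 \ell_{n-1}^2$, with $c_1, c_2 > 0$ depending only on $\epsilon$, $\beta$, the interval bounds of \cref{assum:parameters}, and $\sup_n |\mathfrak a_n|$, $\sup_n |\mathfrak b_n|$, $\sup_n |\mathfrak c_n|$. For $n = 0$ the claim is trivial since $u_0 = v_0 = 0$, so one may take $c_2 = 0$ there.

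There is no genuine obstacle: the argument is essentially bookkeeping of constants. The only points demanding care are verifying that all the denominators, and the full coefficients in \eqref{eq:bound_on_deviations}, are bounded away from zero uniformly in $n$ — which is precisely what \eqref{eq:denominator-lower-bound-1}--\eqref{eq:denominator-lower-bound-2} combined with \cref{assum:parameters} give — and being mindful of the index shift, i.e., reading the claim for $n \geq 1$.
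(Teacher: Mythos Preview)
Your proposal is correct and follows essentially the same route as the paper: rewrite the target vector in terms of the $\ell_n$-defining vector $w_n$ plus $u_n$ and $v_n$, split via a crude Young-type inequality, and absorb the $u_n,v_n$ terms into $\ell_{n-1}^2$ using \eqref{eq:bound_on_deviations}. The only cosmetic differences are that the paper uses an iterated two-term Young split (factors $2$ and $4$) and a $\max$ over the two reciprocal weights in \eqref{eq:bound_on_deviations}, whereas you use a one-shot three-term split and explicit uniform lower bounds $c_u,c_v$ on those weights; both lead to the same conclusion with slightly different constants.
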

\begin{proof}
  The assertion is proven by repeatedly applying Young's inequality and subsequently using the norm condition~\eqref{eq:bound_on_deviations}:
  \begin{multline*}
    \norm{\mathfrak a_n \p{p_n - x_n} + \mathfrak b_n u_n + \mathfrak c_n v_n}_M^2 \\
    \begin{aligned}
      &= \Biggl\|\mathfrak a_n \p{p_n - x_n + \frac{\lambda_n \gamma_n \beta}{2 - \lambda_n \gamma_n \beta} u_n - \frac{2 \p{1 - \lambda_n}}{4 - 2 \lambda_n - \gamma_n \beta} v_n} \\
      &\qquad + \p{\mathfrak b_n - \frac{\lambda_n \gamma_n \beta \mathfrak a_n}{2 - \lambda_n \gamma_n \beta}} u_n + \p{\mathfrak c_n + \frac{2 \mathfrak a_n \p{1 - \lambda_n}}{4 - 2 \lambda_n - \gamma_n \beta}} v_n\Biggr\|_M^2 \\
      &\leq 2 \mathfrak a_n^2 \norm{p_n - x_n + \frac{\lambda_n \gamma_n \beta}{2 - \lambda_n \gamma_n \beta} u_n - \frac{2 \p{1 - \lambda_n}}{4 - 2 \lambda_n - \gamma_n \beta} v_n}_M^2 \\
      &\qquad + 2 \norm{\p{\mathfrak b_n - \frac{\lambda_n \gamma_n \beta \mathfrak a_n}{2 - \lambda_n \gamma_n \beta}} u_n + \p{\mathfrak c_n + \frac{2 \mathfrak a_n \p{1 - \lambda_n}}{4 - 2 \lambda_n - \gamma_n \beta}} v_n}_M^2 \\
      &\leq \frac{4 \mathfrak a_n^2}{\lambda_n \p{4 - 2 \lambda_n - \gamma_n \beta}} \ell_n^2 \\
      &\qquad + 4 \p{\mathfrak b_n - \frac{\lambda_n \gamma_n \beta \mathfrak a_n}{2 - \lambda_n \gamma_n \beta}}^2 \norm{u_n}_M^2 + 4 \p{\mathfrak c_n + \frac{2 \mathfrak a_n \p{1 - \lambda_n}}{4 - 2 \lambda_n - \gamma_n \beta}}^2 \norm{v_n}_M^2 \\
      &\leq \frac{4 \mathfrak a_n^2}{\lambda_n \p{4 - 2 \lambda_n - \gamma_n \beta}} \ell_n^2 + 4 \mathfrak d_n \zeta_{n-1} \ell_{n-1}^2
    \end{aligned}
  \end{multline*}
  with
  \begin{multline*}
    \mathfrak d_n \defeq \max\Biggl\{ \frac{2 - \lambda_n \gamma_n \beta}{\lambda_n \gamma_n \beta} \p{\mathfrak b_n - \frac{\lambda_n \gamma_n \beta \mathfrak a_n}{2 - \lambda_n \gamma_n \beta}}^2, \\
    \frac{4 - 2 \lambda_n - \gamma_n \beta}{\lambda_n \p{2 - \lambda_n \gamma_n \beta}} \p{\mathfrak c_n + \frac{2 \mathfrak a_n \p{1 - \lambda_n}}{4 - 2 \lambda_n - \gamma_n \beta}}^2 \Biggr\}.
  \end{multline*}
  It is straightforward to show, by using \cref{assum:parameters}, that $\frac{4 \mathfrak a_n^2}{\lambda_n \p{4 - 2 \lambda_n - \gamma_n \beta}}$ and $4 \mathfrak d_n \zeta_{n-1}$ are bounded, completing the proof.
\end{proof}

Now, we are in the position to complete the argument of this section's main result.
\begin{proof}[Proof of \cref{thm:lin_conv} continued]
  Since all the relevant coefficients on the right-hand side of \eqref{eq:xn-delta-estimation} are bounded due to \cref{assum:parameters}, using \cref{lem:x-p-u-v-ell-estimation} on all the norms and combining the results yields $c_1, c_2 > 0$ such that
  \[
    \tfrac{1}{2} \distsq{M}{x_n}{\zer{A+C}} \leq c_1 \ell_n^2 + c_2 \ell_{n-1}^2.
  \]
  Multiplying this with any $\delta' > 0$ and adding \eqref{eq:Lyapunov3} gives
  \begin{multline*}
    \distsq{M}{x_{n+1}}{\zer{A+C}} + \p{1 - \delta' c_1} \ell_n^2 \\
    \begin{aligned}
      &\leq \p{1 - \tfrac{\delta'}{2}} \distsq{M}{x_n}{\zer{A+C}} + \p{\zeta_{n-1} + \delta' c_2} \ell_{n-1}^2 \\
      &\leq \p{1 - \tfrac{\delta'}{2}} \distsq{M}{x_n}{\zer{A+C}} + \p{1 - \epsilon + \delta' c_2} \ell_{n-1}^2.
    \end{aligned}
  \end{multline*} 
  Choosing (for example) $\delta'$ as the smaller of the two solutions to
  \[
    \p{1 - \tfrac{\delta'}{2}} \p{1 - \delta' c_1} = \p{1 - \epsilon + \delta' c_2},
  \]
  namely
  \begin{equation}\label{eq:linear-delta}
    \delta' = \frac{1 + 2 c_1 + 2 c_2}{2 c_1} - \sqrt{\frac{\p{1 + 2 c_1 + 2 c_2}^2}{4 c_1^2} - \frac{2 \epsilon}{c_1}},
  \end{equation}
  proves \cref{itm:thm-linear-i} with $\delta = \delta' c_1$ and $q = 1 - \delta'/2$. 
  For the proof of \cref{itm:thm-linear-ii}, choose $c_1', c_2' > 0$ according to \cref{lem:x-p-u-v-ell-estimation} such that
  \begin{equation}\label{eq:xn-ell-estimation}
    \norm{x_{n+1} - x_n}_M^2 = \lambda_n^2 \norm{p_n - x_n - \frac{\p{1 - \lambda_n} \gamma_n \beta}{2 - \lambda_n \gamma_n \beta} u_n - v_n}_M^2 \leq c_1' \ell_n + c_2' \ell_{n-1}
  \end{equation}
for all $n \geq 1$. From~\cref{itm:thm-linear-i}, we get $\delta > 0$ and $0 \leq q < 1$ such that
  \begin{align*}
    \distsq{M}{x_{n+1}}{\zer{A + C}} &+ \p{1 - \delta} \ell_n^2 \leq q \p{\distsq{M}{x_n}{\zer{A + C}} + \p{1 - \delta} \ell_{n-1}^2}
  \end{align*}
for all $n \geq 1$. Repeatedly applying this relation gives
  \begin{align*}
    \ell_n^2
    &\leq \frac{1}{1 - \delta} \p{\distsq{M}{x_{n+1}}{\zer{A + C}} + \p{1 - \delta} \ell_n^2} \\
    &\leq \frac{q^n}{1 - \delta} \p{\distsq{M}{x_1}{\zer{A + C}} + \p{1 - \delta} \ell_0^2}.
  \end{align*}
  Inserting into \eqref{eq:xn-ell-estimation} and taking square roots on both sides yields
  \[
    \norm{x_{n+1} - x_n}_M \leq q^{n/2} \sqrt{\frac{c_1' + c_2'/ q}{1 - \delta} \p{\distsq{M}{x_1}{\zer{A + C}} + \p{1 - \delta} \ell_0^2}}.
  \]
 Let us choose $m > n \geq 1$ and apply the triangle inequality,
 \begin{align}
   \norm{x_m - x_n}_M
   &\leq \sum_{k = n}^{m-1} \norm{x_{k+1} - x_k}_M \nonumber \\
   &\leq \sum_{k = n}^{m-1} q^{k/2} \sqrt{\frac{c_1' + c_2'/ q}{1 - \delta} \p{\distsq{M}{x_1}{\zer{A + C}} + \p{1 - \delta} \ell_0^2}} \nonumber \\
   &\leq \sum_{k = n}^\infty q^{k/2} \sqrt{\frac{c_1' + c_2'/ q}{1 - \delta} \p{\distsq{M}{x_1}{\zer{A + C}} + \p{1 - \delta} \ell_0^2}} \nonumber \\
   &= q^{n/2} \frac{1}{1 - \sqrt q}\sqrt{\frac{c_1' + c_2'/ q}{1 - \delta} \p{\distsq{M}{x_1}{\zer{A + C}} + \p{1 - \delta} \ell_0^2}} \label{eq:linear-Cauchy}
 \end{align}
 showing that \seq{x}{n} is a Cauchy sequence, hence $x_n \to x^*$ as $n \to \infty$ with $x^*$ from~\cref{thm:main}. The other claim of~\cref{itm:thm-linear-ii} follows by letting $m \to \infty$ in \eqref{eq:linear-Cauchy}.
\end{proof}

\begin{remark}
    The analysis in~\cref{sec:conv} requires $\beta > 0$, but it can in an analogous way be done with the choice $C = 0$ and $\beta = 0$ without division by zero, leading to the iteration and safeguarding condition mentioned in \cref{ex:backward}.
\end{remark}

\section{Special cases}
\label{sec:special_cases}
In this section, we present some special cases of our algorithm.

\subsection{Primal--dual splitting with deviations} \label{subsec:primal--dual}
We are concerned with the primal inclusion problem of finding $x \in \PrimS$ such that
\begin{align} \label{eq:inclusion_CondatVu}
    0 \in Ax + L^*B(Lx) + Cx
\end{align}
under the following assumption.
\begin{assumption}
\label{assum:primal_dual_CondatVu}
We assume that
\renewcommand{\labelenumi}{\emph{(\roman{enumi})}}
\begin{enumerate}
    \item $A:\PrimS \rightarrow 2^{\PrimS}$ is a maximally monotone operator;
    \item $B:\DualS \rightarrow 2^{\DualS}$ is a maximally monotone operator;
    \item $L:\PrimS \rightarrow \DualS$ is a bounded linear operator;
    \item $C:\PrimS \rightarrow \PrimS$ is a $\tfrac{1}{\beta}$-cocoercive operator with respect to $\|\cdot\|$;
    \item the solution set $\zer{A+L^*BL+C}  \defeq \{ x\in\PrimS : 0\in Ax + L^*B(Lx) + Cx \}$ is nonempty.
\end{enumerate}
\end{assumption}

Problem~\eqref{eq:inclusion_CondatVu} can be translated to a primal--dual problem \cite{he2012convergence}: $x\in \PrimS$ is a solution to \eqref{eq:inclusion_CondatVu} if and only if there exists $\mu \in B \p{L x}$ (the \emph{dual variable}) such that
\begin{equation}
\begin{aligned}
0&\in Ax + L^*\mu + Cx,\\
0&\in -Lx + B^{-1}\mu.
\end{aligned}
\label{eq:primal-dual-inclusions}
\end{equation}
Define the primal--dual pair $w \defeq (x,\mu) \in \PrimS \times \DualS$. Then, \eqref{eq:primal-dual-inclusions} can be restated as
\begin{align} \label{eq:primal_dual_inclusion}
    0\in\mathcal{A}w + \mathcal{C}w,
\end{align}
where (with slight abuse of notation in the infinite-dimensional setting) 
\begin{align}\label{eq:new_A_C}
   \mathcal{A} = \begin{bmatrix}A & L^*\\-L & B^{-1}\end{bmatrix}, && \mathcal{C} = \begin{bmatrix}C & 0\\0 & 0\end{bmatrix}.
\end{align}
The operator $\mathcal{A}$ is maximally monotone  by \cite[Proposition 26.32]{bauschke2017convex} and $\mathcal{C}$ is  $1/\beta$-cocoercive with respect to the metric $\norm{\cdot}_M$, with
\begin{align}\label{eq:VuCondat_M}
    M = \begin{bmatrix}
        I & -\tau L^*\\
        -\tau L & \tau\sigma^{-1}I
    \end{bmatrix}   
\end{align}
where $\sigma,\tau>0$ such that $\sigma\tau\|L\|^2 < 1$.

The translation of~\eqref{eq:inclusion_CondatVu} to \eqref{eq:primal_dual_inclusion} via the two operators $\mathcal A$ and $\mathcal C$ shows that \cref{alg:main:deviations} using the metric $M$ can be used to solve problem \eqref{eq:inclusion_CondatVu}. We present this special case in \cref{alg:CondatVu_corrected}, along with the subsequent result on its convergence.
\begin{algorithm} []
	\caption{}
	\begin{algorithmic}[1]
	    \State \textbf{Input:} $(x_0,\mu_0) \in \PrimS\times\DualS$, the sequences \seq{\lambda}{n} and \seq{\zeta}{n} as defined in \cref{assum:parameters}, and $\sigma,\tau>0$ such that $\sigma\tau\|L\|^2 < 1$.
	    \State \textbf{set:} $u_{x,0}=v_{x,0}=0$, $v_{\mu,0}=0$.
		\For {$n=0,1,2,\ldots$}
		    \State $\Tilde{x}_n = x_n+ u_{x,n}$\label{alg:line:4}
		    \State $\begin{bmatrix} \hat{x}_n\\ \hat{\mu}_n \end{bmatrix} = \begin{bmatrix}x_n\\\mu_n \end{bmatrix}+ \begin{bmatrix} \tfrac{(1-\lambda_n)\tau\beta}{2-\lambda_n\tau\beta}u_{x,n}+v_{x,n}\\v_{\mu,n}\end{bmatrix}$
		    \State $\begin{bmatrix} p_{x,n}\\ p_{\mu,n} \end{bmatrix} = \begin{bmatrix} J_{\tau A} \left(\hat{x}_n-\tau L^*\hat{\mu}_n - \tau C\Tilde{x}_n \right)\\J_{\sigma B^{-1}} \left(\hat{\mu}_n+\sigma L(2p_{x,n}-\hat{x}_n)\right) \end{bmatrix}$
		    \State $\begin{bmatrix}x_{n+1}\\\mu_{n+1} \end{bmatrix} = \begin{bmatrix}x_n\\\mu_n \end{bmatrix} + \lambda_n \left( \begin{bmatrix}p_{x,n}\\p_{\mu,n}\end{bmatrix} - \begin{bmatrix}\hat{x}_n\\\hat{\mu}_n \end{bmatrix} \right)$
		    \State choose $u_{n+1}=(u_{x,n+1},u_{\mu,n+1})$ and  $v_{n+1}=(v_{x,n+1},v_{\mu,n+1})$ such that
		    \begin{multline}\label{eq:pd_deviations_bound}
            \tfrac{\lambda_{n+1}\tau\beta}{2-\lambda_{n+1}\tau\beta}\norm{u_{x,n+1}}^2 + \tfrac{\lambda_{n+1} \p{2-\lambda_{n+1}\tau\beta}}{4-2\lambda_{n+1}-\tau\beta}\norm{\begin{bmatrix}v_{x,n+1}\\v_{\mu,n+1}\end{bmatrix}}_{M}^2\\
                 \leq \zeta_{n}\tfrac{\lambda_n(4-2\lambda_n-\tau\beta)}{2}\Biggl\lVert\begin{bmatrix}p_{x,n}\\p_{\mu,n}\end{bmatrix} - \begin{bmatrix}x_{n}\\\mu_{n}\end{bmatrix} + \tfrac{\lambda_n\tau\beta}{2-\lambda_n\tau\beta}\begin{bmatrix}u_{x,n}\\0\end{bmatrix} \\
                -  \tfrac{2(1 - \lambda_n)}{4-2\lambda_n-\tau\beta}\begin{bmatrix}v_{x,n}\\v_{\mu,n}\end{bmatrix}\Biggr\rVert_{M}^2
            \end{multline}
	
		\EndFor
	\end{algorithmic}
\label{alg:CondatVu_corrected}
\end{algorithm}

\begin{corollary}\label{cor:primal_dual_split}
Consider monotone inclusions \eqref{eq:primal_dual_inclusion} and suppose that Assumption \ref{assum:primal_dual_CondatVu} holds. Let \seq{x}{n} and \seq{\mu}{n} denote the primal and the dual sequences, respectively, that are obtained from \cref{alg:CondatVu_corrected}. Then \seq{x}{n} converges weakly to a point in $\zer{A+L^*BL+C}$.
\end{corollary}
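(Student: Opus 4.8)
The plan is to view \cref{alg:CondatVu_corrected} as a particular instance of \cref{alg:main} run on the product-space inclusion \eqref{eq:primal_dual_inclusion}, and then to invoke \cref{thm:main}. Concretely, I would take the Hilbert space to be $\PrimS\times\DualS$, the operators to be $\mathcal A$ and $\mathcal C$ from \eqref{eq:new_A_C}, the metric to be $M$ from \eqref{eq:VuCondat_M}, the step size to be the constant $\gamma_n\equiv\tau$, and the relaxation and safeguard sequences $\lambda_n,\zeta_n$ to be those prescribed in the input of \cref{alg:CondatVu_corrected} (which are exactly as in \cref{assum:parameters}). Under this dictionary the abstract iterate $x_n$ of \cref{alg:main} is the primal--dual pair $w_n=(x_n,\mu_n)$, the auxiliary variables $y_n$ and $z_n$ are $\tilde x_n$ (completed by $\mu_n$ in the dual block) and $(\hat x_n,\hat\mu_n)$, and $p_n$ is $(p_{x,n},p_{\mu,n})$. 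Since the second block of $\mathcal C$ vanishes, a dual deviation $u_{\mu,n}$ never enters any computation, so we take it to be $0$; then $u_n=(u_{x,n},0)$, whence $\norm{u_n}_M=\norm{u_{x,n}}$, and the safeguard \eqref{eq:bound_on_deviations} together with \eqref{eq:ell_n} specialises exactly to \eqref{eq:pd_deviations_bound}.

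Next I would verify the structural identities. \textbf{(a) Resolvent decoupling.} With $\gamma_n=\tau$ the operator $M+\tau\mathcal A$ is block lower triangular: its $(1,1)$-block is $I+\tau A$, its $(1,2)$-block vanishes, its $(2,1)$-block is $-2\tau L$, and its $(2,2)$-block is $\tau\sigma^{-1}I+\tau B^{-1}$. Writing the defining inclusion $\tfrac1\tau M(z_n-p_n)-\mathcal C y_n\in\mathcal A p_n$ of step~\ref{alg:main:FBstep} block by block and solving for the blocks of $p_n$ in turn yields first $p_{x,n}=J_{\tau A}\p{\hat x_n-\tau L^*\hat\mu_n-\tau C\tilde x_n}$ and then $p_{\mu,n}=J_{\sigma B^{-1}}\p{\hat\mu_n+\sigma L(2p_{x,n}-\hat x_n)}$, i.e.\ exactly the updates of \cref{alg:CondatVu_corrected}; the over-relaxed argument $2p_{x,n}-\hat x_n$ in the dual step is produced automatically by the off-diagonal block of $M$. \textbf{(b) Relaxation.} Step~\ref{alg:main:x}, $x_{n+1}=x_n+\lambda_n(p_n-z_n)$, together with steps~\ref{alg:main:y}--\ref{alg:main:z}, translates verbatim into the remaining updates of \cref{alg:CondatVu_corrected}. \textbf{(c) Hypotheses.} \cref{assum:monotone_inclusion} holds for this instance: $\mathcal A$ is maximally monotone by \cite[Proposition~26.32]{bauschke2017convex}, $\mathcal C$ is $\tfrac1\beta$-cocoercive with respect to $\norm{\cdot}_M$ as recorded after \eqref{eq:VuCondat_M}, $M$ is bounded, self-adjoint, and strongly positive on $\PrimS\times\DualS$ by a standard estimate using $\sigma\tau\norm{L}^2<1$, and $\zer{\mathcal A+\mathcal C}\neq\emptyset$ because, by \cref{assum:primal_dual_CondatVu}, there is $\xs\in\zer{A+L^*BL+C}$ and a corresponding $\mu^\star\in B(L\xs)$ with $(\xs,\mu^\star)$ solving \eqref{eq:primal-dual-inclusions}, i.e.\ $0\in\mathcal A(\xs,\mu^\star)+\mathcal C(\xs,\mu^\star)$; finally the constant choice $\gamma_n\equiv\tau$ and the prescribed $\lambda_n,\zeta_n$ satisfy \cref{assum:parameters} for this instance.

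With all identifications in place, \cref{thm:main} applies to the sequence $w_n=(x_n,\mu_n)$ and yields $w_n\weakto w^\star$ for some $w^\star=(x^\star,\mu^\star)\in\zer{\mathcal A+\mathcal C}$; here weak convergence may be read in the canonical topology of $\PrimS\times\DualS$ since $M$ is bounded and strongly positive, so the $M$- and canonical topologies coincide. Composing with the (bounded, linear) projection onto the primal factor gives $x_n\weakto x^\star$, and unpacking $0\in\mathcal A w^\star+\mathcal C w^\star$ through the equivalence \eqref{eq:primal-dual-inclusions} shows $x^\star\in\zer{A+L^*BL+C}$, which is the assertion. I expect the main obstacle to be part~(a) together with the coefficient bookkeeping in part~(c): one must check with care that \cref{alg:CondatVu_corrected}'s explicit, sequential resolvent evaluations are \emph{exactly} what the preconditioned resolvent $(M+\tau\mathcal A)^{-1}$ computes and that every coefficient in \eqref{eq:pd_deviations_bound} and \eqref{eq:ell_n} matches after substituting $\gamma_n=\tau$, $u_{\mu,n}=0$, and $M$ from \eqref{eq:VuCondat_M}; once these routine but lengthy verifications are done, the convergence claim is immediate from \cref{thm:main}.
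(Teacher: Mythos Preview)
Your proposal is correct and follows essentially the same route as the paper: identify \cref{alg:CondatVu_corrected} as \cref{alg:main} applied to $\mathcal A,\mathcal C$ on $\PrimS\times\DualS$ with metric $M$ from \eqref{eq:VuCondat_M}, $\gamma_n\equiv\tau$, $u_n=(u_{x,n},0)$, $v_n=(v_{x,n},v_{\mu,n})$, verify that $(M+\tau\mathcal A)^{-1}$ decouples into the two sequential resolvents, and invoke \cref{thm:main}. Your write-up is in fact somewhat more explicit than the paper's in checking \cref{assum:monotone_inclusion} for the product-space instance and in spelling out that weak convergence of $w_n=(x_n,\mu_n)$ implies weak convergence of the primal component to a point of $\zer{A+L^*BL+C}$.
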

\begin{proof}
In \cref{alg:main:deviations}, replace $A$ by $\mathcal{A}$ and $C$ by $\mathcal{C}$ as devised by \eqref{eq:new_A_C}, and substitute $(x_n,\mu_n)$ in place of $x_n$, and also set $p_n=(p_{x,n},p_{\mu,n})$, $y_n=(\Tilde{x}_n,\mu_n)$, $z_n=(\hat{x}_n,\hat{\mu}_n)$, $u_n= (u_{x,n},0)$, $v_n= (v_{x,n},v_{\mu,n})$, $M$ as is in \eqref{eq:VuCondat_M}, and $\gamma_n=\tau$ ($n \in \nat$). These changes, along with the update formula 
\begin{align*}
    p_{n}=(p_{x,n},p_{\mu,n})&=(M + \tau\mathcal{A})^{-1}(M z_n - \tau\mathcal{C}y_n)\\
    &=\begin{bmatrix}I+\tau A&0\\-2\tau L&\tau\sigma^{-1}I+\tau B^{-1}\end{bmatrix}^{-1} \begin{bmatrix}\hat{x}_n -\tau L^*\hat{\mu}_n - \tau C\Tilde{x}_n\\-\tau L\hat{x}_n + \tau\sigma^{-1}\hat{\mu}_n\end{bmatrix}\\
    &=\begin{bmatrix}
        (I+\tau A)^{-1}(\hat{x}_n-\tau L^*\hat{\mu}_n - \tau C\Tilde{x}_n)\\
        (I+\sigma B^{-1})^{-1}(\hat{\mu}_n+\sigma L(2p_{x,n}-\hat{x}_n))
    \end{bmatrix}\\
    &= \begin{bmatrix} J_{\tau A} \left(\hat{x}_n-\tau L^*\hat{\mu}_n - \tau C\Tilde{x}_n \right)\\J_{\sigma B^{-1}} \left(\hat{\mu}_n+\sigma L(2p_{x,n}-\hat{x}_n)\right) \end{bmatrix},
\end{align*}
result in \cref{alg:CondatVu_corrected}. Therefore, \cref{alg:CondatVu_corrected} is a special instance of \cref{alg:main:deviations}; and the corollary is an immediate consequence of \cref{thm:main}.
\end{proof}

\begin{remark}
In \cref{alg:CondatVu_corrected}, it might be expected that we get  $\Tilde{\mu}_n = \mu_n + u_{\mu,n}$, which is the dual counterpart of  $\Tilde{x}_n = x_n + u_{x,n}$, but we do not. That is because the corresponding part of $\Tilde{\mu}_n$ of the operator $\mathcal{C}$ in \eqref{eq:new_A_C}, i.e. its second column, is zero, and thus, there is no need to define the dual counterpart of $\Tilde{x}_n$.
\end{remark}

\begin{remark}
In \cref{alg:CondatVu_corrected}, letting all deviations $u_{x, n}$, $v_{x, n}$, $v_{\mu,n}$ ($n \in \nat$) be zero and $\lambda_n=1$ give 
\begin{align*}
    x_{n+1} &= J_{\tau A} \left(x_n-\tau L^*\mu_n - \tau Cx_n \right),\\
    \mu_{n+1} &= J_{\sigma B^{-1}} \left(\mu_n+\sigma L(2x_{n+1}-x_n)\right).
\end{align*}
This is the Condat--V{\~u} algorithm in its basic form \cite{condat2013primal,vu2013splitting}, which, with $C=0$, reduces to the basic form of the Chambolle--Pock primal--dual method \cite{chambolle2011first}. 
\end{remark}

\begin{remark}
By letting $C = 0$, $\beta=0$, and $u_{x,n}=0$  for all $n\in\mathbb{N}$ in \cref{alg:CondatVu_corrected}, we arrive at a Chambolle--Pock method with deviations and the condition~\eqref{eq:pd_deviations_bound} reduces to
\begin{equation*}
    \begin{aligned}
    \norm{\begin{bmatrix}v_{x,n+1}\\v_{\mu,n+1}\end{bmatrix}}_{M}^2\leq \zeta_{n}\tfrac{(2-\lambda_{n+1})(2-\lambda_n)\lambda_n}{\lambda_{n+1}} \norm{\begin{bmatrix}p_{x,n}\\p_{\mu,n}\end{bmatrix} - \begin{bmatrix}x_{n}\\\mu_{n}\end{bmatrix} - \tfrac{1-\lambda_{n}}{2-\lambda_{n}}\begin{bmatrix}v_{x,n}\\v_{\mu,n}\end{bmatrix}}_{M}^2. \quad
\end{aligned}
\end{equation*}
\end{remark}

\subsection{\Krasnoselsky--Mann iteration with deviations}
Consider the fixed-point problem 
\begin{equation}\label{eq:fixed-point-problem}
    x = Tx,
\end{equation}
where $T:\PrimS\rightarrow \PrimS$ is a nonexpansive operator. Then, by \cite[Remark~4.34, Corollary~23.9]{bauschke2017convex}, there is a maximally  monotone operator $A:\PrimS\rightarrow2^\PrimS$ for which $J_{\gamma A}= \tfrac{1}{2}\Id+\tfrac{1}{2}T$, with $\gamma>0$. This correspondence suggests that \cref{alg:main:deviations} can be used to solve \eqref{eq:fixed-point-problem}. Letting $C=0$, $\beta=0$, $M=\Id$, and $u_n=0$ for all $n\in\nat$ in \cref{alg:main:deviations}, results in \cref{alg:KM_corrected}, that can be used to solve problem~\eqref{eq:fixed-point-problem}. Weak convergence of \cref{alg:KM_corrected} is shown in \cref{cor:KM}.

\begin{corollary}\label{cor:KM}
Consider the fixed-point problem \eqref{eq:fixed-point-problem}; suppose that its solution set is nonempty and let $J_{\gamma A}=\tfrac{1}{2}\Id+\tfrac{1}{2}T$. Then, the sequence \seq{x}{n}, that is generated by \cref{alg:KM_corrected}, converges weakly to a point in the solution set of the problem.
\end{corollary}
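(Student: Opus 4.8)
The plan is to exhibit \cref{alg:KM_corrected} as the instance of \cref{alg:main} produced by the substitution announced just above the corollary, so that the conclusion follows from \cref{thm:main}~\ref{itm:thm-main-iii}.

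First I would pin down the monotone operator. Since $T$ is nonexpansive, $\tfrac12\Id + \tfrac12 T$ is firmly nonexpansive and, by \cite[Remark~4.34, Corollary~23.9]{bauschke2017convex}, coincides with the resolvent $J_{\gamma A}$ of a maximally monotone operator $A$ for the prescribed $\gamma > 0$ (concretely, $\gamma A = \p{\tfrac12\Id + \tfrac12 T}^{-1} - \Id$). Because $x = J_{\gamma A}x \Leftrightarrow 0 \in \gamma Ax \Leftrightarrow 0 \in Ax$ and $J_{\gamma A}x = \tfrac12 x + \tfrac12 Tx$, the solution set of \eqref{eq:fixed-point-problem} equals $\zer{A}$, which is nonempty by hypothesis. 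Taking $C = 0$ — which is $\tfrac1\beta$-cocoercive with respect to $\norm{\cdot}$ for every $\beta > 0$, so that, as in \cref{ex:backward} and the remark following \cref{thm:lin_conv}, the formal choice $\beta = 0$ is legitimate and introduces no division by zero — \cref{assum:monotone_inclusion} holds for $0 \in Ax + Cx$.

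Next I would substitute $C = 0$, $\beta = 0$, $M = \Id$, $\gamma_n \equiv \gamma$, and $u_n = 0$ ($n \in \nat$) into \cref{alg:main} and check, line by line, that the result is \cref{alg:KM_corrected}: step~\ref{alg:main:z} gives $z_n = x_n + v_n$; step~\ref{alg:main:FBstep} gives $p_n = J_{\gamma A}(x_n + v_n) = \tfrac12(x_n + v_n) + \tfrac12 T(x_n + v_n)$; step~\ref{alg:main:x} gives $x_{n+1} = x_n + \lambda_n(p_n - x_n - v_n)$; and the safeguard \eqref{eq:bound_on_deviations}--\eqref{eq:ell_n} collapses to $\tfrac{\lambda_{n+1}}{2 - \lambda_{n+1}}\norm{v_{n+1}}^2 \le \zeta_n \lambda_n(2 - \lambda_n)\norm{p_n - x_n - \tfrac{1 - \lambda_n}{2 - \lambda_n}v_n}^2$, which is the norm condition used in \cref{alg:KM_corrected}. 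I would also verify that the sequences \seq{\lambda}{n} and \seq{\zeta}{n} in \cref{alg:KM_corrected} satisfy \cref{assum:parameters} in the $\beta = 0$ regime, where its conditions read $0 \le \zeta_n \le 1 - \epsilon$, $\gamma \ge \epsilon$, and $\epsilon \le \lambda_n \le 2 - \tfrac{\epsilon}{2}$ — i.e.\ the classical relaxation window $(0,2)$ for \Krasnoselsky--Mann iterations, bounded away from its endpoints.

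With \cref{alg:KM_corrected} thus identified as a special case of \cref{alg:main} satisfying \cref{assum:monotone_inclusion,assum:parameters}, \cref{thm:main}~\ref{itm:thm-main-iii} gives that $(x_n)_{n \in \nat}$ converges weakly to a point of $\zer{A + C} = \zer{A}$, which is the solution set of \eqref{eq:fixed-point-problem}, as claimed. The only genuinely delicate point is the passage to $\beta = 0$: one must make sure that none of the coefficients appearing in \cref{alg:main}, in the Lyapunov estimate of \cref{lemma:Lyapunov_ineq}, or in \eqref{eq:Delta-estimation} secretly required $\beta > 0$ — which is exactly what the remark after \cref{thm:lin_conv} asserts. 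Everything else is the routine bookkeeping of plugging in parameter values.
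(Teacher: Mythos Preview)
Your proposal is correct and follows exactly the approach the paper takes: the text preceding \cref{alg:KM_corrected} already states that the algorithm is obtained from \cref{alg:main} via the substitutions $C=0$, $\beta=0$, $M=\Id$, and $u_n=0$, so the corollary is an immediate consequence of \cref{thm:main}; your write-up simply fleshes out the line-by-line verification and the identification $\zer{A}=\operatorname{Fix}(T)$ that the paper leaves implicit.
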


\begin{algorithm}[]
	\caption{}
	\begin{algorithmic}[1]
	    \State \textbf{Input:} $x_0\in\PrimS$, and the sequences \seq{\lambda}{n}, \seq{\gamma}{n}, and \seq{\zeta}{n} according to \cref{assum:parameters}.
	    \State \textbf{set:} $v_0 = 0$
		\For {$n=0,1,\ldots$}
		    \State $z_n = x_n + v_n$
		    \State $p_n =\tfrac{1}{2}(\Id + T)(x_n + v_n)$
		    \State $x_{n+1} = (1-\lambda_n)x_n + \lambda_n(p_n - v_n)$
		    \State choose $v_{n+1}$ such that
		    \begin{equation} \label{eq:deviation_bound_KM}
            \begin{aligned}
                \norm{v_{n+1}}^2 \leq \zeta_{n}\tfrac{\lambda_n(2-\lambda_n)(2-\lambda_{n+1})}{\lambda_{n+1}}\norm{p_{n}-x_{n}+\tfrac{\lambda_n-1}{2-\lambda_n}v_{n}}^2
            \end{aligned}
            \end{equation}
		\EndFor
	\end{algorithmic}
\label{alg:KM_corrected}
\end{algorithm}

Setting $v_n=0$ for all $n\in\nat$ in \cref{alg:KM_corrected} results in
\begin{align*} 
    x_{n+1} =(1-\tfrac{\lambda_n}{2})x_n+\tfrac{\lambda_n}{2}T(x_n) \label{eq:KM2},
\end{align*}
which is the standard  \Krasnoselsky--Mann iteration \cite[Corollary~5.17]{bauschke2017convex}.

\section{A novel inertial primal--dual splitting algorithm}
\label{sec:novel_inertial_pd_alg}
In this section, we present a novel inertial primal--dual method to solve problem \eqref{eq:inclusion_CondatVu} with $C=0$. We construct this algorithm from \cref{alg:CondatVu_corrected} by considering a special structure for the deviation vector. We preset the deviation vector direction at the $n$-th iteration to be aligned with the momentum direction, i.e.,  $v_n =  a_n(x_n-x_{n-1},\mu_n-\mu_{n-1})$, and use the bound on the norm of deviations to compute $a_n$. Since this algorithm is an instance of \cref{alg:CondatVu_corrected}, its convergence is guaranteed by \cref{cor:primal_dual_split}.

\begin{algorithm} []
	\caption{}
	\begin{algorithmic}[1]
	    \State \textbf{Input:} $(x_0,\mu_0) \in \PrimS\times\DualS$, and the sequences \seq{\lambda}{n} and \seq{\zeta}{n} as stated in \cref{assum:parameters}.
	    \State \textbf{set:} $a_0=0$
		\For {$n=0,1,2,\ldots$}
		    \State $\begin{bmatrix}\hat{x}_n\\\hat{\mu}_n\end{bmatrix}=\begin{bmatrix}x_{n}\\\mu_{n}\end{bmatrix}+a_n\begin{bmatrix}x_n-x_{n-1}\\ \mu_n-\mu_{n-1}\end{bmatrix}$
		    \State $\begin{bmatrix} p_{x,n}\\ p_{\mu,n} \end{bmatrix} = \begin{bmatrix} J_{\tau A} \left(\hat{x}_n-\tau L^*\hat{\mu}_n \right)\\J_{\sigma B^{-1}} \left(\hat{\mu}_n+\sigma L(2p_{x,n}-\hat{x}_n)\right) \end{bmatrix}$
		    \State $\begin{bmatrix}x_{n+1}\\\mu_{n+1} \end{bmatrix} = \begin{bmatrix}x_n\\\mu_n \end{bmatrix} + \lambda_n \left( \begin{bmatrix}p_{x,n}\\p_{\mu,n}\end{bmatrix} - \begin{bmatrix}\hat{x}_n\\\hat{\mu}_n \end{bmatrix} \right)$
		    \State choose $a_{n+1}$  such that \label{alg:line:a_n}
		    \begin{equation}\label{eq:pd_deviations_bound_inertial}
            \begin{split}
                & a_{n+1}^2\norm{\begin{bmatrix}x_{n+1}-x_{n}\\\mu_{n+1}-\mu_{n}\end{bmatrix}}_{M}^2\\
                &\hspace{10mm} \leq \zeta_{n}\tfrac{\lambda_n(2-\lambda_n)(2-\lambda_{n+1})}{\lambda_{n+1}}\Biggl\lVert\begin{bmatrix}p_{x,n}-x_n\\p_{\mu,n}-\mu_n\end{bmatrix} + \tfrac{\lambda_n-1}{2-\lambda_n}a_n\begin{bmatrix}x_{n}-x_{n-1}\\\mu_{n}-\mu_{n-1}\end{bmatrix}\Biggr\rVert_{M}^2
            \end{split}
            \end{equation}
		\EndFor
	\end{algorithmic}
\label{alg:inertial_pd}
\end{algorithm}

\begin{remark}
Even though \cref{alg:inertial_pd} has similarities with translations of the algorithms of \cite{Alvarez_2000,Alvarez_2001,attouch2019convergence,Cholamjiak_2018,lorenz2015inertial} to a primal--dual framework, to the best of our knowledge, the former and the latter cannot be derived from each other, and thus, are essentially different.
\end{remark}

\subsection{Efficient evaluation of the norm condition}

In order to compute the bound on the coefficients $a_n$ using \eqref{eq:pd_deviations_bound_inertial}, one needs to compute some $M$-induced norms, which involves evaluating $L$ and $L^*$. Depending on the complexity of evaluating $L$ and $L^*$, these evaluations may be computationally expensive. However, by scrutinizing \cref{alg:inertial_pd}, it is observed that some of the previous evaluations can be reused to keep the additional computational cost low compared to the standard Chambolle--Pock algorithm. In what follows, we provide more details on how to compute the required scaled norm of the vector quantities in a computationally efficient manner.

As seen in line \ref{alg:line:a_n} of \cref{alg:inertial_pd}, at each iteration one of each $L$ and $L^*$ evaluations are performed.  Similar operations take place at each iteration of, e.g., the Chambolle--Pock algorithm. However, in our algorithm, we have other operations involving evaluations of $L$ and $L^*$. Those are due to verification of the norm condition in line 8 of \cref{alg:inertial_pd}. More specifically, since the kernel $M$ is given by \eqref{eq:VuCondat_M}
for each evaluation of $\norm{\cdot}_M$, we have one more evaluation each of $L$ and $L^*$. This can lead to a substantially higher computational cost. However, except for the first iteration, the extra $L$ and $L^*$ evaluations can be computed from the computations which are already available from previous iterations. 
That is possible due to the relations
\begin{equation}\label{eq:recursion}
    \begin{aligned}
        L\hat{x}_n &= Lx_n + b_n(Lx_n-Lx_{n-1}),\\
        L^*\hat{\mu}_n &= L^*\mu_n + b_n(L^*\mu_n-L^*\mu_{n-1}),\\
        Lx_{n+1} &= Lx_{n} + \lambda_{n}(Lp_{x,n}-L\hat{x}_n),\\
        L^*\mu_{n+1} &= L^*\mu_{n} + \lambda_{n}(L^*p_{\mu,n}-L^*\hat{\mu}_n),
    \end{aligned}
\end{equation}
which are derived from lines 5 and 7 of \cref{alg:inertial_pd}. In the relations above, for $n>0$, all quantities  to the right hand side are already computed and can be reused, except for $Lp_{x,n}$ and $L^*p_{\mu,n}$ that need to be computed via direct evaluation.

\Cref{tab:recursive_computation} provides the list of evaluations involving $L$ and $L^*$ that we need to perform at the first three iterations. It reveals that at the first iteration, we need to perform six different evaluations involving $L$ or $L^*$, of which four might be computationally heavy and two can be done cheaply. After that, i.e. for $n>0$, we only need to perform two such heavy evaluations per iteration; namely, $Lp_{x,n}$ and $L^*p_{\mu,n}$. The rest of the $L$ and $L^*$ evaluations can be done efficiently by exploiting previously computed quantities and \eqref{eq:recursion}. This keeps the computational per-iteration cost of our algorithm basically the same as that of the Chambolle--Pock algorithm.

\begin{table}[H]
  \begin{center}
 \begin{tabular}{ | c | c | c | } 
 \hline
 $n $ &  Expensive evaluations & Cheap evaluations\\
 \hline
  $0$  & $Lx_0$, $L^*\mu_0$, $Lp_{x,0}$, $L^*p_{\mu,0}$ & $Lx_1$, $L^*\mu_1$   \\
 \hline
 $1$  & $Lp_{x,1}$, $L^*p_{\mu,1}$ & $L\hat{x}_2$, $Lx_2$, $L^*\hat{\mu}_2$, $L^*\mu_2$\\
 \hline
 $2$  & $Lp_{x,2}$, $L^*p_{\mu,2}$ & $L\hat{x}_3$, $Lx_3$, $L^*\hat{\mu}_3$, $L^*\mu_3$\\
 \hline
\end{tabular}
\caption{\label{tab:recursive_computation}List of evaluations that involve $L$ and $L^*$ for the first three iterations. The second column shows direct and potentially expensive evaluations and the third column shows evaluations that can be done cheaply via the relations in \eqref{eq:recursion}.}
\end{center}
\end{table}

\section{Numerical Experiments}
\label{sec:numerical_experiments}

We solve an $l_1$-norm regularized SVM problem for classification of the form
\begin{align}\label{eq:l1-regularized-svm}
    \underset{ x }{\text{minimize }} f(Lx)+g(x),
\end{align}
given a labeled training data set $\{\theta_i,\phi_i \}_{i=1}^N$, where $\theta_i\in\reals^d$ and $\phi_i\in\{-1,1\}$ are training data and labels,  respectively, and with
\begin{align*}
    f(Lx) = \mathbf{1}^T \max\lp\mathbf{0}, \mathbf{1}-Lx\rp, && g(x) = \xi\|\omega\|_1, && L = \begin{bmatrix} \phi_1\theta_1^T & \phi_1\\ \vdots & \vdots \\ \phi_N\theta_N^T & \phi_N\end{bmatrix},
\end{align*}
where $\mathbf{0} = \p{0, \ldots, 0}^T$, $\mathbf{1} = \p{1, \ldots, 1}^T$, $x = (\omega,b)$ is the decision variable with  $b\in\reals$ and $\omega\in\reals^d$,  $\max(\cdot,\cdot)$ acts element-wise, and $\xi\geq0$ is the regularization parameter.

A point $\xs$ is a solution to \eqref{eq:l1-regularized-svm} if and only if it satisfies
\begin{equation*}
    0\in L^*\partial f(L\xs) + \partial g(\xs).
\end{equation*}
This holds, since $f$ and $g$ are proper, closed, and convex functions with full domains, and thus, $\partial f$ and $\partial g$ are maximally monotone and $L$ is a linear operator \cite[Proposition~16.42]{bauschke2017convex}. This monotone inclusion problem is an instance of \eqref{eq:inclusion_CondatVu} with $A = \partial g$, $B=\partial f$, and $C=0$. As in \cref{subsec:primal--dual}, we transform the problem into a primal--dual problem and solve it with primal--dual algorithms.

We compare our inertial primal--dual method, \cref{alg:inertial_pd}, to the standard Chambolle--Pock (CP) \cite{chambolle2011first}, and to the inertial primal--dual algorithm of Lorenz--Pock (LP) \cite{lorenz2015inertial}. 
In all experiments, we set the primal and the dual step-sizes to $\tau=\sigma = 0.99/\norm{L}$, the regularization parameter of problem \eqref{eq:l1-regularized-svm} to $\xi = 0.1$, and $\zeta_n$ is, for each $n \in \nat$, sampled  from a uniform distribution on $[0,1-10^{-6}]$. The experiments are done using the \emph{liver disorders} data-set \cite{chang2011libsvm} which has 145 samples and 5 features. The solution $(\xs,\mu^\star)$ is found by running the standard Chambolle--Pock algorithm until the residual gets smaller than $10^{-15}$.

\begin{figure}[ht]
    \centering
    \begin{subfigure}{0.49\textwidth}
    \centering
    \begin{tikzpicture}[scale=1.0]
      \node[inner sep=0pt] (fig) at (0,0) {\includegraphics[ width = 0.99\linewidth,keepaspectratio]{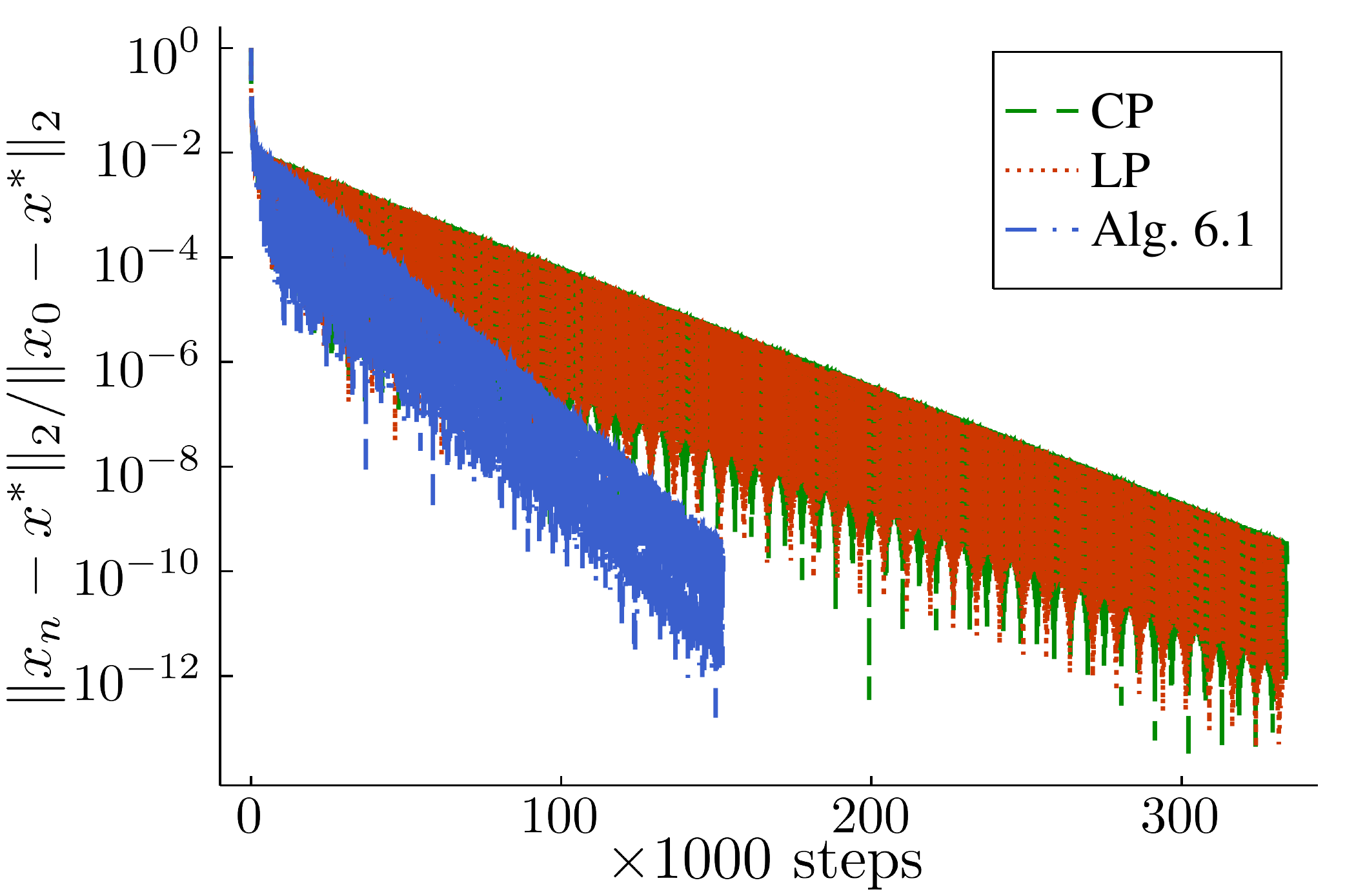}};
        \draw [draw=none, fill=white] (1.79,0.8) rectangle (2.6,1.7);
        \node[text=black,anchor=west](legend) at (1.82,1.51){{\tiny CP}};
        \node[text=black,anchor=west](legend) at (1.82,1.25){{\tiny LP}};
        \node[text=black,anchor=west](legend) at (1.82,0.97){{\tiny Alg. 4}};
        \draw [draw=none,fill=white] (-1.5,-2.2) rectangle (1.5,-1.7);
        \node[text=black](xlabel) at (0.05\linewidth,-.33\linewidth) {{\footnotesize $\times 10^3$ iteration}};
        \draw [draw=none,fill=white] (-3,-1.5) rectangle (-2.6,1.5);
        \node[rotate=90, text=black] (ylabel) at (-.48\linewidth,0.1) {{\footnotesize ${\norm{x_n-\xs}}/{\norm{x_0-\xs}}$}};
    \end{tikzpicture}
    \end{subfigure}
    \begin{subfigure}{0.49\textwidth}
    \centering
    \begin{tikzpicture}[scale=1.0]
        \node[inner sep=0pt] (fig) at (0,0) {\includegraphics[ width = 0.99\linewidth ,keepaspectratio]{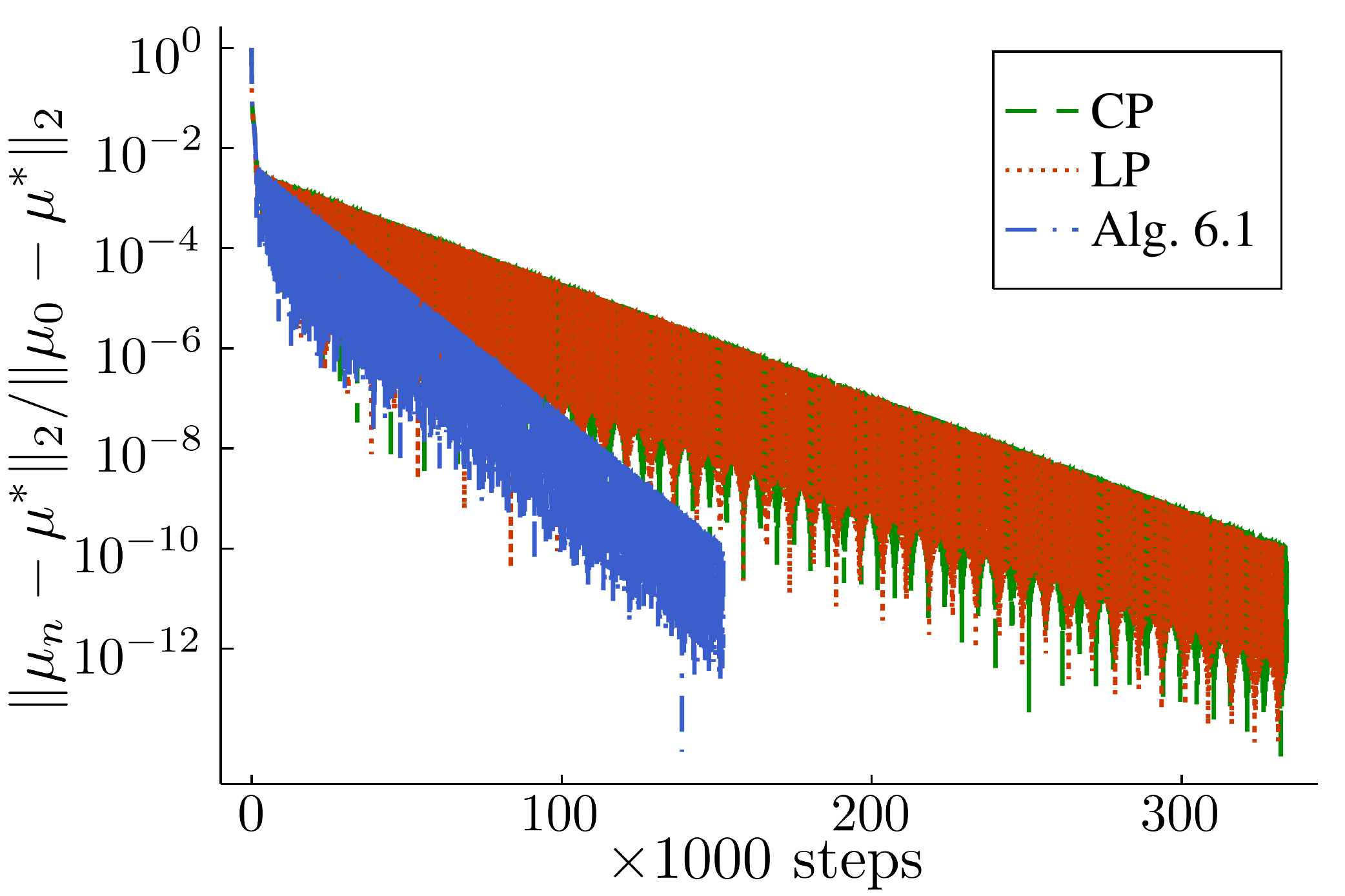}};
        \draw [draw=none, fill=white] (1.79,0.8) rectangle (2.6,1.7);
        \node[text=black,anchor=west](legend) at (1.82,1.51){{\tiny CP}};
        \node[text=black,anchor=west](legend) at (1.82,1.25){{\tiny LP}};
        \node[text=black,anchor=west](legend) at (1.82,0.97){{\tiny Alg. 4}};
        \draw [draw=none,fill=white] (-1.5,-2.2) rectangle (1.5,-1.7);
        \node[text=black](xlabel) at (0.05\linewidth,-.33\linewidth) {{\footnotesize $\times 10^3$ iteration}};
        \draw [draw=none,fill=white] (-3,-1.5) rectangle (-2.6,1.5);
        \node[rotate=90, text=black] (ylabel) at (-.48\linewidth,0.1) {{\footnotesize ${\norm{\mu_n-\mu^\star}}/{\norm{\mu_0-\mu^\star}}$}};
    \end{tikzpicture}
    \end{subfigure}
\caption{Distance to the solution vs. iteration number for the $l_1$-norm regularized SVM \eqref{eq:l1-regularized-svm} with $\xi = 0.1$, on the \emph{liver disorders} data-set \cite{chang2011libsvm} with 145 samples and 5 features. Solved using Chambolle--Pock primal--dual algorithm (CP), Lorenz--Pock inertial primal--dual method (LP), and \cref{alg:inertial_pd} with $\lambda = 1.0$. The primal and dual step-sizes are set to $\tau=\sigma = 0.99/\norm{L}$ for all algorithms. }
\label{fig:comp_alg}
\end{figure}

\begin{figure}[ht]
    \centering
    \begin{tikzpicture}[scale=1.0]
      \node[inner sep=0pt] (fig) at (0,0) {\includegraphics[ width = 0.49\linewidth,keepaspectratio]{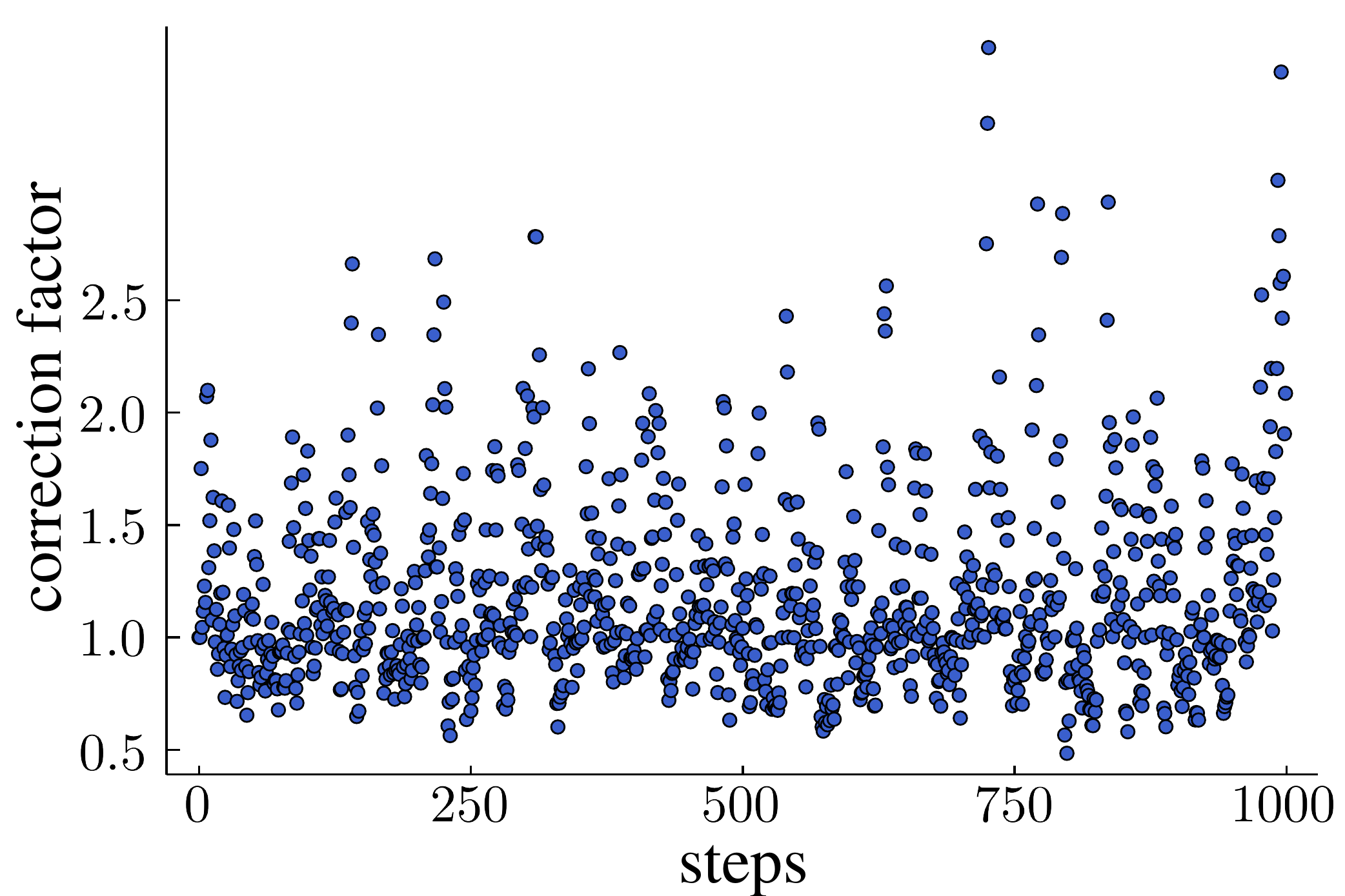}};
        \draw [draw=none,fill=white] (-1.5,-2.2) rectangle (1.5,-1.7);
        \node[text=black](xlabel) at (0.25,-1.9) {{\footnotesize iteration}};
        \draw [draw=none,fill=white] (-3,-1.5) rectangle (-2.6,1.5);
        \node[rotate=90, text=black] (ylabel) at (-2.85,-0.2) {{\footnotesize scaling factor}};
    \end{tikzpicture}
\caption{Scaling factor $a_n$ of \cref{alg:inertial_pd} in the experiment shown in \cref{fig:comp_alg} vs. iteration number for the first 1000 iterations.}
\label{fig:corr_fact}
\end{figure}

\begin{figure}[ht]
    \centering
    \begin{subfigure}{0.49\textwidth}
    \centering
    \begin{tikzpicture}[scale=1.0]
      \node[inner sep=0pt] (fig) at (0,0) {\includegraphics[width=0.99\linewidth,keepaspectratio]{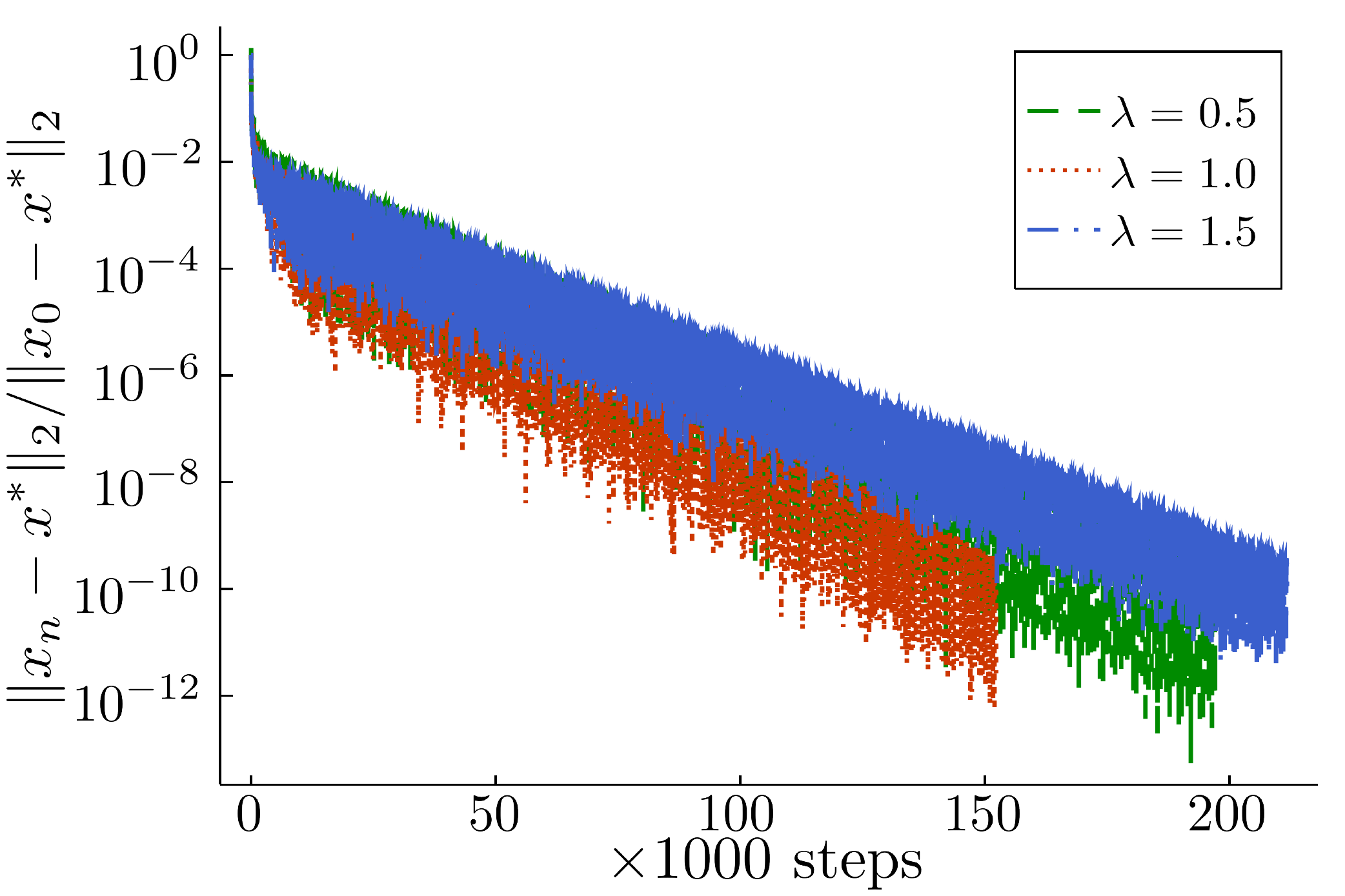}};
        \draw [draw=none, fill=white] (1.79,0.8) rectangle (2.6,1.7);
        \node[text=black,anchor=west](legend) at (1.7,1.51){{\tiny $\lambda=0.5$}};
        \node[text=black,anchor=west](legend) at (1.7,1.25){{\tiny $\lambda=1.0$}};
        \node[text=black,anchor=west](legend) at (1.7,0.97){{\tiny $\lambda=1.5$}};
        \draw [draw=none,fill=white] (-1.5,-2.2) rectangle (1.5,-1.7);
        \node[text=black](xlabel) at (0.05\linewidth,-.33\linewidth) {{\footnotesize $\times 10^3$ iteration}};
        \draw [draw=none,fill=white] (-3,-1.5) rectangle (-2.6,1.5);
        \node[rotate=90, text=black] (ylabel) at (-.48\linewidth,0.1) {{\footnotesize ${\norm{x_n-\xs}}/{\norm{x_0-\xs}}$}};
    \end{tikzpicture}
    \end{subfigure}
    \begin{subfigure}{0.49\textwidth}
    \centering
    \begin{tikzpicture}[scale=1.0]
        \node[inner sep=0pt] (fig) at (0,0) {\includegraphics[width=0.99\linewidth,keepaspectratio]{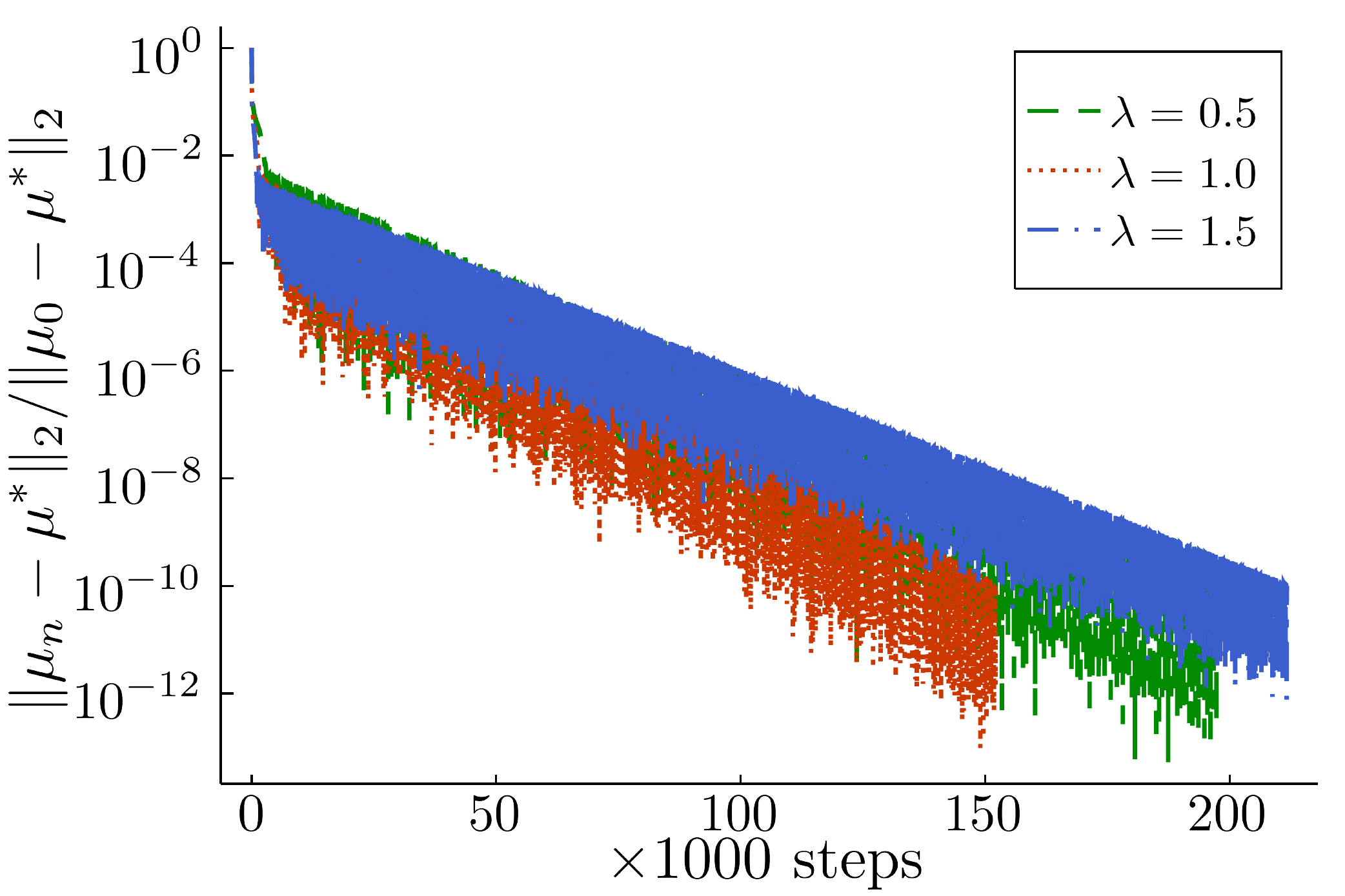}};
        \draw [draw=none, fill=white] (1.79,0.8) rectangle (2.6,1.7);
        \node[text=black,anchor=west](legend) at (1.7,1.51){{\tiny $\lambda=0.5$}};
        \node[text=black,anchor=west](legend) at (1.7,1.25){{\tiny $\lambda=1.0$}};
        \node[text=black,anchor=west](legend) at (1.7,0.97){{\tiny $\lambda=1.5$}};
        \draw [draw=none,fill=white] (-1.5,-2.2) rectangle (1.5,-1.7);
        \node[text=black](xlabel) at (0.05\linewidth,-.33\linewidth) {{\footnotesize $\times 10^3$ iteration}};
        \draw [draw=none,fill=white] (-3,-1.5) rectangle (-2.6,1.5);
        \node[rotate=90, text=black] (ylabel) at (-.48\linewidth,0.1) {{\footnotesize ${\norm{\mu_n-\mu^\star}}/{\norm{\mu_0-\mu^\star}}$}};
    \end{tikzpicture}
    \end{subfigure}
\caption{Distance to the solution vs. iteration number for the $l_1$-norm regularized SVM \eqref{eq:l1-regularized-svm} with $\xi = 0.1$, on the \emph{liver disorders} data-set \cite{chang2011libsvm} with 145 samples and 5 features. Solved using \cref{alg:inertial_pd} for some values of  $\lambda$ with $\tau=\sigma = 0.99/\norm{L}$. }
\label{fig:comp_lambda}
\end{figure}

For the $l_1$-norm regularized SVM problem, since $f$ and $g$ are piece-wise linear, the resulting (primal--dual) monotone operator 
\begin{align*}
   \mathcal{A} = \begin{bmatrix}\partial g & L^*\\-L & \partial f^*\end{bmatrix}
\end{align*}
is metrically subregular at any point in the solution set of the problem for $0$, see \cite[Lemma IV.4]{Latafat2019ANR}. It therefore follows from \cref{thm:lin_conv} that the algorithm exhibits local linear convergence, see \cref{fig:comp_alg} and \cref{fig:comp_lambda}.
The figures reveal that our method needs about half the number of iterations to reach the same accuracy as the other two methods.
This improvement comes at essentially no extra computational cost.  

\Cref{fig:corr_fact} shows the first one thousand scaling factors $a_n$ of \cref{alg:inertial_pd} for the same implementation as in \cref{fig:comp_alg}. It is seen that the scaling factor attains mostly values close to one. 

In \cref{fig:comp_lambda}, the impact of the relaxation parameter $\lambda$ is investigated. In the sense of convergence rate, it interestingly seems that $\lambda = 1.0$ yields the best performance in this example.

\paragraph{Acknowledgement.} The authors would like to thank Bo Bernhardsson for his valuable feedback on this work. This research was partially supported by Wallenberg AI, Autonomous Systems and Software Program (WASP) funded by the Knut and Alice Wallenberg Foundation. Sebastian Banert was partially supported by ELLIIT.

\printbibliography

\end{document}